\documentclass[12pt]{article}
\usepackage[a4paper,margin=1.0in]{geometry}
\usepackage[colorlinks,citecolor=magenta,linkcolor=black]{hyperref}
\pdfpagewidth=\paperwidth \pdfpageheight=\paperheight
\usepackage{amsfonts,amssymb,amsthm,amsmath,eucal,tabu,url}
\usepackage{caption}
\usepackage{float}
\usepackage{pgf}
 \usepackage{array}
 \usepackage{tikz-cd}
 \usepackage{pstricks}
 \usepackage{pstricks-add}
 \usepackage{pgf,tikz}
 \usetikzlibrary{automata}
 \usetikzlibrary{arrows}
 \usepackage{indentfirst}
 \pagestyle{myheadings}
\usepackage{tabularx} 
%*****************************************************************************
% Style

%*****************************************************************************
% Theorems

\theoremstyle{plain}
\newtheorem{thm}{Theorem}[section]
\newtheorem{theorem}[thm]{Theorem}

\newtheorem{proposition}[thm]{Proposition}
\newtheorem{corollary}[thm]{Corollary}
\newtheorem{conjecture}[thm]{Conjecture}

\theoremstyle{definition}
\newtheorem{definition}[thm]{Definition}
\newtheorem{remark}[thm]{Remark}
\newtheorem{example}[thm]{Example}

\newtheorem{question}[thm]{Question}
\newtheorem{problem}[thm]{Problem}

\newtheorem{thevarthm}[thm]{\varthmname}

\newenvironment{varthm*}[1]{\trivlist\item[]{\bf #1.}\it}{\endtrivlist}

%*****************************************************************************
% Macros

\renewcommand\geq{\geqslant}

\renewcommand\leq{\leqslant}

\newcommand\be{\begin{eqnarray*}}
\newcommand\ee{\end{eqnarray*}}

\newcommand\newop[2]{\def#1{\mathop{\rm #2}\nolimits}}
\newop\log{log}
\newop\ord{ord}
\newop\Gal{Gal}
\newop\SL{SL}
\newop\Bl{Bl}
\newop\mult{mult}
\newop\mass{mass}
\newop\div{div}
\newop\codim{codim}
\newop\sing{sing}
\newop\vdim{vdim}
\newop\edim{edim}
\newop\Ass{Ass}
\newop\size{size}
\newop\reg{reg}
\newop\satdeg{satdeg}
\newop\supp{supp}
\newop\Neg{Neg}
\newop\Nef{Nef}
\newop\Nefh{Nef_H}
\newop\Eff{Eff}
\newop\Zar{Zar}
\newop\MB{MB}
\newop\MBxC{MB\mathit{(x,C)}}
\newop\NnB{NnB}
\newop\Bigg{Big}
\newop\Effbar{\overline{\Eff}}

\def\keywordname{{\bfseries Keywords}}%
\def\keywords#1{\par\addvspace\medskipamount{\rightskip=0pt plus1cm
\def\and{\ifhmode\unskip\nobreak\fi\ $\cdot$
}\noindent\keywordname\enspace\ignorespaces#1\par}}
\def\subclassname{{\bfseries Mathematics Subject Classification
(2020)}\enspace}
\def\subclass#1{\par\addvspace\medskipamount{\rightskip=0pt plus1cm
\def\and{\ifhmode\unskip\nobreak\fi\ $\cdot$
}\noindent\subclassname\ignorespaces#1\par}}

\begin{document}
\title{Singular plane curves: freeness and combinatorics}
\author{Michael Cuntz and Piotr Pokora}
\date{\today}
\maketitle

\thispagestyle{empty}
\begin{abstract}
In this paper we focus on various aspects of singular complex plane curves, mostly in the context of their homological properties and the associated combinatorial structures. We formulate some challenging open problems that can point to new directions in research, for example by introducing weak Ziegler pairs of curve arrangements. Moreover, we construct new examples of different Ziegler pairs, in both the classical and the weak sense, and present new geometric approaches to construction problems of singular plane curves.
\subclass{singular plane curves; minimal free resolutions; combinatorics}
\keywords{14N20, 51B05, 51A45, 14N25, 32S25}
\end{abstract}
\section{Introduction}
The main goal of the present paper is to provide a coherent and short introduction to the world of singular plane curves, which is a subject lying on the boundary of algebra, geometry, and combinatorics. Our idea here is to explore very interesting deep properties of algebraic curves and their importance in the context of very recent big open problems in the so-called \textbf{combinatorial algebraic geometry}. By combinatorial algebraic geometry we mean the field that is based on close bridges between combinatorics and algebraic geometry, and where these fields of research cooperate in full symbiosis. The Leitmotif for our discussion here is the Numerical Terao's Conjecture (we will write \textbf{NTC} for short) and this problem builds the core of the paper. Roughly speaking, \textbf{NTC} predicts that the freeness of reduced plane curves is determined by the weak-combinatorics. This conjecture is somehow surprising and absolutely not obvious. We are aware of the fact that \textbf{NTC} does not hold in the class of line arrangements in the complex projective plane, and we deliver a very detailed discussion regarding this subject, but up to right now there is no single counterexample to this conjecture when we extend our consideration to curves such that irreducible components are no longer, and not only, of degree one. This is a very mysterious thing and we do not know how to explain this phenomenon. However, to get some feeling about this problem, it seems quite natural to explore new classes of plane curves in order to understand which geometric/combinatorial properties can have an impact on the failure of \textbf{NTC}. In the last few years researchers focused exactly on that area of studies, namely to study specific classes of curves that are free and to explore their geometric properties. The first very natural class for such consideration is the class of rational plane curve arrangements with quasi-homogeneous singularities. The main motivation standing behind this choice is very natural. First, it generalizes the framework of line arrangements, and secondly in this class the total Tjurina number of a given curve is determined by its weak-combinatorics.

Let us outline the structure of the present paper. The first part has an expository nature, we focus on the notion of free plane curves. Then we focus on the weak-combinatorics that can be attached to a given reduced plane curve. We will show that this is a very delicate problem and why we cannot use the notion of (strong) combinatorics that is defined for line arrangements verbatim to the world of plane curves. Next, we focus on \textbf{NTC} and we present a detailed outline on that subject emphasizing the recent developments. We conclude our studies by introducing and studying the notion of weak Ziegler pairs for different classes of curves. In particular, we construct the second known example of a weak Ziegler pair consisting of two conic-line arrangements with ordinary quasi-homogeneous singularities. In the last part of the paper, we come back to the classical setting of line arrangement. We construct and describe in detail new examples of Ziegler pairs that are strictly related to a geometric construction revolving around the orchard problem. We should note here that there are only a few examples of Ziegler pairs of line arrangements, and there are no known combinatorial or geometric invariants that can potentially indicate where to look for new cases of Ziegler pairs. More surprisingly, our work suggests that there may exist an infinite sequence of Ziegler pairs of line arrangements that is based on our construction of the orchard problem, and this would be the first such sequence!

We hope that the synergy of the expository part and the current research will be fruitful for the reader and it will help to understand better the current challenges standing behind {\bf NTC}. We should emphasize that there is an excellent recent survey devoted to free plane curves and their geometric properties by Alex Dimca \cite{SurDim}, but our paper is strictly oriented on \textbf{NTC} and weak-combinatorics of plane curves, so we will stay parallel with respect to Alex's survey and we hope that our presentation will play a complementary role.

We work exclusively in the projective setting over the complex numbers. Many computations presented in that survey are performed using \verb}SINGULAR} \cite{Singular}. We will use the local normal forms of plane curve singularities captured from \cite{arnold}.

\section{Freeness of plane curves}
Let $S := \mathbb{C}[x,y,z]$ be the graded ring of polynomials with complex coefficients, and for a homogeneous polynomial $f \in S$ let us denote by $J_{f}$ the Jacobian ideal associated with $f$, that is, the ideal of the form $J_{f} = \langle \partial_{x}\, f, \partial_{y} \, f, \partial_{z} \, f \rangle$. 
We assume in the whole paper that our plane curves $C \, : f=0$ are always reduced. We will need the following definition.
\begin{definition}
Let $p$ be an isolated singularity of a polynomial $f\in \mathbb{C}[x,y]$. Since we can change the local coordinates, assume that $p=(0,0)$.
Furthermore, the number 
$$\mu_{p}=\dim_\mathbb{C}\bigg( \mathbb{C} \{x,y \} / \langle \partial_{x} f, \partial_{y} f \rangle \bigg)$$
is called the (local) Milnor number of $f$ at $p$.

The number
$$\tau_{p}=\dim_\mathbb{C} \bigg( \mathbb{C}\{x, y\} / \langle f, \partial_{x} f, \partial_{y} f \rangle \bigg)$$
is called the (local) Tjurina number of $f$ at $p$.
\end{definition}
\begin{remark}
For a projective situation, with a point $p\in \mathbb{P}^{2}_{\mathbb{C}}$ and a homogeneous polynomial $F\in \mathbb{C}[x,y,z]$, we can take local affine coordinates such that $p=(0,0,1)$, and then the dehomogenization of $F$.\\
Finally, the total Tjurina number of a given reduced curve $C \subset \mathbb{P}^{2}_{\mathbb{C}}$ is defined as
$$\tau(C) = \sum_{p \in {\rm Sing}(C)} \tau_{p}.$$ 
\end{remark}
We will work mostly with quasi-homogeneous singularities.
\begin{definition}
A singularity is called quasi-homogeneous if and only if there exists a holomorphic change of variables so that the defining equation becomes weighted homogeneous.
\end{definition}
 More specifically, $f(x,y) = \sum_{i,j}c_{i,j}x^{i}y^{j}$ is weighted homogeneous if there exist rational numbers $\alpha, \beta$ such that $\sum_{i,j} c_{i,j}x^{i\cdot \alpha} y^{j \cdot \beta}$ is homogeneous. One can show that if $f(x,y)$ is a convergent power series with an isolated singularity at the origin, then $f(x,y)$ is in the ideal generated by the partial derivatives if and only if $f$ is quasi-homogeneous. It means that in the quasi-homogeneous case one has $\tau_{p} = \mu_{p}$, i.e., the local Tjurina number of $p$ is equal to the local Milnor number of $p$. Moreover, if $C \, : f=0$ is a reduced plane curve with only quasi-homogeneous singularities, then
$$\tau(C) = \sum_{p \in {\rm Sing}(C)} \tau_{p} = \sum_{p \in {\rm Sing}(C)} \mu_{p} = \mu(C),$$ which means that the total Tjurina number of $C$ is equal to the total Milnor number of $C$.

Before we define the ultimate class of curves in our investigations, i.e., free plane curves, we introduce the following general definition.
 
\begin{definition}
\label{hom}
Let $C : f=0$ be a reduced curve in $\mathbb{P}^{2}_{\mathbb{C}}$ of degree $d$ given by $f \in S$. Denote by $M(f) := S/ J_{f}$ the Milnor algebra. We say that $C$ is $m$-syzygy when $M(f)$ has the following minimal graded free resolution:
$$0 \rightarrow \bigoplus_{i=1}^{m-2}S(-e_{i}) \rightarrow \bigoplus_{i=1}^{m}S(1-d - d_{i}) \rightarrow S^{3}(1-d)\rightarrow S$$
with $e_{1} \leq e_{2} \leq ... \leq e_{m-2}$ and $1\leq d_{1} \leq ... \leq d_{m}$. 
\end{definition}
\begin{definition}
The $m$-tuple $(d_{1}, ..., d_{m})$ in Definition \ref{hom} is called the exponents of $C$.
\end{definition}
In the light of Definition \ref{hom}, being $m$-syzygy curve is a homological condition that is decoded by the shape of the minimal free resolution of the Milnor algebra. Among the $m$-syzygy curves, the most important ones, from our very subjective point of view, are the following \cite{Saito}.
\begin{definition}
We say that a reduced curve $C \subset \mathbb{P}^{2}_{\mathbb{C}}$ of degree $d$ is free if and only if $C$ is $2$-syzygy, and in that case $d_{1}+d_{2}=d-1$.
\end{definition}
In other words, a reduced plane curve $C \subset \mathbb{P}^{2}_{\mathbb{C}}$ is free if the corresponding minimal free resolution of the Milnor algebra is Hilbert-Burch. Now we want to present a geometric approach showing that the freeness of plane curves is decoded by their total Tjurina numbers and one special exponent.
\begin{definition}
Let $C \subset \mathbb{P}^{2}_{\mathbb{C}}$ be a reduced curve given by $f \in S_{d}$.
We define the graded $S$-module of algebraic relations associated with $f$ as
$${\rm AR}(f) = \{(a,b,c)\in S^{\oplus 3} : a \cdot\partial_{x}f+ b\cdot \partial_{y} f + c\cdot \partial_{z} f =0 \}.$$
Then the minimal degree of Jacobian relations among the partial derivatives of $f$ is defined as
$${\rm mdr}(f):=\min\{r : \, {\rm AR}(f)_{r} \neq 0 \}.$$
\end{definition}
\begin{remark}
In the light of Definition \ref{hom}, we have 
$${\rm mdr}(f) := d_{1}.$$
\end{remark}
\begin{remark}
Sometimes we also write ${\rm mdr}(C) = {\rm mdr}(f)$, where $C : f=0$.
\end{remark}
\begin{example}
Let us consider the arrangement $\mathcal{L} \subset \mathbb{P}^{2}_{\mathbb{C}}$ given by 
$$Q(x,y,z)=xyz.$$
Using the language of the elementary projective geometry, $\mathcal{L}$ is the fundamental triangle consising of $3$ nodes as the intersection points located at the three fundamental points of $\mathbb{P}^{2}_{\mathbb{C}}$. 
Obviously ${\rm mdr}(Q)>0$ since there are no $\alpha,\beta,\gamma \in \mathbb{C}\setminus \{0\}$ such that
$$\alpha \partial_{x}f + \beta\partial_{y} f + \gamma\partial_{z}f = \alpha yz + \beta xz + \gamma xy = 0.$$
On the other hand, it is easy to see that ${\rm mdr}(Q)=1$, i.e., we have the following relation
$$x \partial_{x}f - y\partial_{y} f = 0.$$
\end{example}
As we have seen so far, to show that a given reduced plane curve $C \, : f=0$ is free, we need to compute the minimal free resolution of $M(f)$, which can be difficult without a reasonable computer assistance. However, there is an interesting result, which can be considered as folklore and that might be successfully used in some cases. This result is known in the literature as Saito's criterion \cite[Theorem 8.1]{Dimca}.
\begin{theorem}
Let $C : f=0$ be a reduced curve in $\mathbb{P}^{2}_{\mathbb{C}}$ of degree $d$. Let  $r_{1},r_{2} \in {\rm AR}(f)$ be two non-trivial syzygies of the form
$r_{i} = (f_{1}^{i},f_{2}^{i},f_{3}^{i})$ with $i \in \{1,2\}$. Then
$C$ is free if and only if
$${\rm Det}\begin{pmatrix}
x & f_{1}^{1} & f_{1}^{2} \\
y & f_{2}^{1} & f_{2}^{2} \\
z & f_{3}^{1} & f_{3}^{2}
\end{pmatrix} = c \cdot f,$$
where $c$ is a non-zero constant.
\end{theorem}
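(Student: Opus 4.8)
The plan is to derive both directions from one divisibility statement. Call a triple $\rho=(\rho^{1},\rho^{2},\rho^{3})\in S^{\oplus 3}$ \emph{logarithmic} if $\rho^{1}\partial_{x}f+\rho^{2}\partial_{y}f+\rho^{3}\partial_{z}f\in(f)$; every element of ${\rm AR}(f)$ is logarithmic, and so is the Euler triple $E=(x,y,z)$, since the Euler relation gives $x\partial_{x}f+y\partial_{y}f+z\partial_{z}f=d\cdot f$. \textbf{Divisibility Lemma:} if $\rho_{1},\rho_{2},\rho_{3}$ are logarithmic and $N$ is the $3\times3$ matrix with columns $\rho_{1},\rho_{2},\rho_{3}$, then $f\mid\det N$. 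To prove it I would use Cramer's rule ``in reverse'': writing $\rho_{j}\cdot\nabla f=h_{j}f$ (with $\nabla f=(\partial_{x}f,\partial_{y}f,\partial_{z}f)$) and collecting these, $N^{T}\nabla f=f\cdot(h_{1},h_{2},h_{3})^{T}$; multiplying by the adjugate of $N^{T}$ yields $(\det N)\,\partial_{x}f=f\cdot P_{1}$, $(\det N)\,\partial_{y}f=f\cdot P_{2}$, $(\det N)\,\partial_{z}f=f\cdot P_{3}$ with $P_{i}\in S$, so $(\det N)\cdot J_{f}\subseteq(f)$. Now I would invoke that $C$ is reduced: for each irreducible factor $g$ of $f$, writing $f=gh$ with $g\nmid h$, each of the three equations forces $g\mid\det N$ or else $g$ divides the corresponding partial; if $g$ divided all three partials then $g\mid g_{x}h,\,g_{y}h,\,g_{z}h$, hence $g\mid g_{x},g_{y},g_{z}$, forcing $g_{x}=g_{y}=g_{z}=0$, impossible for a positive-degree irreducible over $\mathbb C$. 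So $g\mid\det N$ for every irreducible factor of the squarefree $f$, i.e.\ $f\mid\det N$.

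Two structural facts I would record next. First, $D_{0}(f):=\{\theta\in S^{\oplus 3}:\theta\text{ logarithmic}\}$ decomposes as $D_{0}(f)=S\cdot E\oplus{\rm AR}(f)$: the intersection is $0$ because $g\cdot E$ sends $f$ to $gdf\ne0$ for $g\ne0$, and if $\theta\cdot\nabla f=hf$ then $d\theta-hE\in{\rm AR}(f)$, so $\theta=\tfrac{h}{d}E+\tfrac1d(d\theta-hE)$ with both summands over $S$. Second, ${\rm AR}(f)$ is the kernel of the map $S^{\oplus3}\to S$, $(a,b,c)\mapsto a\partial_{x}f+b\partial_{y}f+c\partial_{z}f$, whose image is the nonzero ideal $J_{f}$, so ${\rm AR}(f)$ has rank $2$; and by the definitions in the excerpt, $C$ is free precisely when ${\rm AR}(f)$ is a free $S$-module, necessarily of rank $2$.

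For ``$\det M=c\cdot f$ with $c\ne0$ $\Rightarrow$ $C$ free'': $\det M\ne0$ makes the columns $E,r_{1},r_{2}$ linearly independent over ${\rm Frac}(S)$. Given $s\in{\rm AR}(f)$, Cramer's rule writes $s=g_{0}E+g_{1}r_{1}+g_{2}r_{2}$ where each $g_{k}=\det M_{k}/\det M$ and $M_{k}$ is obtained from $M$ by replacing one column with $s$; all columns of $M_{k}$ are logarithmic, so the Divisibility Lemma gives $f\mid\det M_{k}$, whence $g_{k}=\det M_{k}/(cf)\in S$. Since $s$ and $g_{1}r_{1}+g_{2}r_{2}$ lie in ${\rm AR}(f)$ while $SE\cap{\rm AR}(f)=0$, we get $g_{0}=0$ and $s=g_{1}r_{1}+g_{2}r_{2}$; thus $r_{1},r_{2}$ generate ${\rm AR}(f)$, and being independent they form a basis, so $C$ is free. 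For the converse, if $C$ is free I would take $r_{1},r_{2}$ to be a minimal basis of ${\rm AR}(f)$, with $\deg r_{i}=d_{i}$ and $d_{1}+d_{2}=d-1$; then $M$ has homogeneous columns of degrees $1,d_{1},d_{2}$, so $\det M$ is homogeneous of degree $1+d_{1}+d_{2}=d$, the Divisibility Lemma gives $f\mid\det M$, and since $\deg f=d$ we conclude $\det M=c\cdot f$, with $c\ne0$ because $E,r_{1},r_{2}$ freely generate $D_{0}(f)$ so $\det M\ne0$.

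I expect the crux to be the Divisibility Lemma, and within it the passage from $(\det N)\cdot J_{f}\subseteq(f)$ to $f\mid\det N$, which is exactly where reducedness of $C$ enters and must be handled carefully; the rest is bookkeeping with Cramer's rule, the Euler relation, and the freeness formula $d_{1}+d_{2}=d-1$ supplied by the excerpt. One point still to settle is the reading of the statement: the ``only if'' is best understood as the existence of syzygies $r_{1},r_{2}$ with $\det M=cf$ — any such pair is then automatically a basis of ${\rm AR}(f)$ with $d_{i}=\deg r_{i}$ — since for a linearly dependent choice of $r_{1},r_{2}$ the determinant vanishes.
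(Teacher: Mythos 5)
The paper does not prove this statement at all: it is quoted as Saito's criterion with a pointer to \cite[Theorem 8.1]{Dimca}, so there is no internal proof to compare against. Your argument is, in essence, the standard proof of that criterion and it is correct: the Divisibility Lemma (obtained from $N^{T}\nabla f = f\,h$ by multiplying with the adjugate, then using that $f$ is squarefree and that an irreducible $g$ over $\mathbb{C}$ cannot divide all of $g_x,g_y,g_z$), the Euler decomposition $D_{0}(f)=S\cdot E\oplus {\rm AR}(f)$, Cramer's rule for the implication ``$\det M = c f$, $c\neq 0$ $\Rightarrow$ ${\rm AR}(f)$ free on $r_1,r_2$'', and the degree count $1+d_1+d_2=d$ together with $f\mid \det M$ for the converse are exactly the classical ingredients. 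Two points are worth making explicit in a write-up. First, the bridge between the paper's definition of freeness (the minimal resolution of $M(f)$ being $2$-syzygy) and your working definition (${\rm AR}(f)$ free of rank two) should be spelled out: up to a shift, ${\rm AR}(f)$ is the kernel of $S^{3}(1-d)\to S$, so being $2$-syzygy means this kernel needs only two homogeneous generators, and a rank-two torsion-free module on two generators is automatically free; conversely a free ${\rm AR}(f)$ has a homogeneous basis, giving the $2$-syzygy resolution, and the relation $d_1+d_2=d-1$ then follows from your determinant identity (or, as the paper's wording ``in that case'' indicates, it is automatic). Second, your reading of the ``only if'' direction is the right one: taken literally for arbitrary non-trivial $r_1,r_2$ (e.g.\ $r_2=r_1$) the statement fails, and the intended meaning is the existence of a pair of syzygies realizing $\det M=c\cdot f$ with $c\neq 0$, which your converse produces by taking a homogeneous basis of ${\rm AR}(f)$; flagging this, as you did, is appropriate rather than a defect of your proof.
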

It is worth recalling that the above Saito's criterion holds in a very general setting, i.e., we can formulate it even for reduced hypersurfaces $V \subset \mathbb{P}^{N}_{\mathbb{C}}$, and it is a quite effective tool provided that we see appropriate candidates for syzygies $r_{1}, r_{2}$.
\begin{example}
Let us come back to our canonical example of the fundamental triangle $\mathcal{L} : xyz=0$. Observe that we have the following two natural syzygies:
$$r_{1} = (x,-y,0), \quad r_{2} = (x,0,-z).$$
Observe that
$${\rm Det}\begin{pmatrix}
x & x & x \\
y & -y & 0 \\
z & 0 & -z
\end{pmatrix} = 3xyz,$$
hence $\mathcal{L}$ is free with exponents $(d_{1},d_{2})=({\rm mdr}(Q),d-1-{\rm mdr}(Q)) = (1,1)$.
\end{example}
As we have said, Saito's criterion is an effective tool provided that we can find two suitable syzygies. On the other hand, the freeness of reduced algebraic curves can be checked via the following deep result by du Plessis and Wall \cite{duP}.

\begin{theorem}[Freeness criterion]
\label{dp}
Let $C \, : f=0$ be a reduced curve in $\mathbb{P}^{2}_{\mathbb{C}}$ of degree $d$. Then the curve $C$ with $d_{1}={\rm mdr}(f)\leq (d-1)/2$ is free if and only if
\begin{equation}
\label{duPles}
(d-1)^{2} - r(d-r-1) = \tau(C).
\end{equation}
\end{theorem}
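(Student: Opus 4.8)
I would translate freeness into the vanishing of the finite-length Jacobian module and read that vanishing off the Hilbert function of the Milnor algebra. First I would record the dictionary. For a reduced curve $C\,:f=0$ one has $\mathrm{mdr}(f)=d_{1}$, and the following are equivalent: $C$ is free; $M(f)=S/J_{f}$ is Cohen--Macaulay of dimension one; the syzygy module $\mathrm{AR}(f)$ is a free $S$-module; the graded $S$-module $N(f):=\widehat{J_{f}}/J_{f}=H^{0}_{\mathfrak m}(M(f))$ is zero (here $\mathfrak m=(x,y,z)$). The last chain is standard: $\dim M(f)=1$ forces $\mathrm{pd}_{S}M(f)\in\{2,3\}$, with $2$ occurring exactly when $\mathrm{depth}\,M(f)=1$, i.e. $N(f)=0$, i.e. the resolution in Definition~\ref{hom} is Hilbert--Burch. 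I would also use that $V(J_{f})=\mathrm{Sing}(C)$ is zero-dimensional of length $\tau(C)$ (locally the Jacobian ideal becomes the Tjurina ideal via the Euler relation), so $\dim_{\mathbb C}M(f)_{k}=\tau(C)$ for $k\gg0$, together with the exact sequence
$$0\longto \mathrm{AR}(f)(1-d)\longto S(1-d)^{3}\longto S\longto M(f)\longto 0,$$
which yields, for every $k$ and with the convention $\binom{n}{2}=0$ for $n<2$,
$$\dim_{\mathbb C}M(f)_{k}=\binom{k+2}{2}-3\binom{k-d+3}{2}+\dim_{\mathbb C}\mathrm{AR}(f)_{k-d+1}.$$

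The implication ``$C$ free $\Rightarrow$ \eqref{duPles}'' is then a short computation: if $C$ is free with exponents $(d_{1},d_{2})$ then $\mathrm{AR}(f)\cong S(-d_{1})\oplus S(-d_{2})$ and $d_{1}+d_{2}=d-1$, so Definition~\ref{hom} reads $0\to S(1-d-d_{1})\oplus S(1-d-d_{2})\to S(1-d)^{3}\to S\to M(f)\to0$; extracting the stable value of the associated Hilbert series gives $\tau(C)=(d-1)^{2}-d_{1}d_{2}$, and substituting $d_{1}=\mathrm{mdr}(f)=r$ and $d_{2}=d-1-r$ gives exactly \eqref{duPles}. Note that $d_{1}\le d_{2}$ automatically forces $r\le(d-1)/2$ here, so that hypothesis costs nothing in this direction.

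For the converse I would assume $r=\mathrm{mdr}(f)\le(d-1)/2$ and $\tau(C)=(d-1)^{2}-r(d-r-1)$ and aim to show $N(f)=0$. Pick a Jacobian syzygy $\rho_{1}\in\mathrm{AR}(f)_{r}$ of minimal degree. Using $2r\le d-1$ I would first check that the three entries of $\rho_{1}$ share no common factor --- otherwise, cancelling that factor and combining with the Euler relation $xf_{x}+yf_{y}+zf_{z}=df$ would produce a syzygy of degree strictly below $r$ --- so $S\rho_{1}\cong S(-r)$ is a rank-one free submodule of $\mathrm{AR}(f)$ with torsion-free quotient. Next I would pin $\dim_{\mathbb C}M(f)_{k}$ down at both ends of the grading: for $k\le d+r-2$ one has $k-d+1<r$, hence $\mathrm{AR}(f)_{k-d+1}=0$ and $\dim_{\mathbb C}M(f)_{k}=\binom{k+2}{2}-3\binom{k-d+3}{2}$ by the displayed identity; and since the finite-length module $N(f)$ is self-dual, with $\dim_{\mathbb C}N(f)_{k}=\dim_{\mathbb C}N(f)_{3(d-2)-k}$, this low-degree information is transported to high degrees. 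The assumption that the stable value of $\dim_{\mathbb C}M(f)_{k}$ equals $(d-1)^{2}-r(d-r-1)$ --- which is precisely the value this bookkeeping produces once $N(f)=0$ --- then squeezes $\dim_{\mathbb C}N(f)_{k}$ to zero for all $k$. Hence $N(f)=0$, $\mathrm{AR}(f)$ is free, and Saito's criterion applied to $\rho_{1}$ together with a complementary syzygy $\rho_{2}$ of degree $d-1-r$ exhibits $C$ as free with exponents $(r,d-1-r)$.

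The main obstacle is the a priori upper bound $\tau(C)\le(d-1)^{2}-r(d-r-1)$ valid whenever $r=\mathrm{mdr}(f)\le(d-1)/2$; this is du Plessis and Wall's inequality, and it is exactly here that the constraint $r\le(d-1)/2$ is used. Their argument runs through the theory of the discriminant: one estimates the codimension of the equisingular stratum of degree-$d$ curves via the polar map attached to $f_{x},f_{y},f_{z}$, whose generic fibre dimension is governed by $\mathrm{mdr}(f)$. I would either invoke this estimate as a black box or reprove it along those lines; granting it, the equality case is then forced to give $N(f)=0$ by the Hilbert-function squeeze above. Everything else is routine bookkeeping; the discriminant estimate is where the real content sits.
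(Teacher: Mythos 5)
Note first that the paper itself gives no proof of this theorem: it is stated as a quoted result of du Plessis and Wall \cite{duP}. Your forward implication is correct and standard: for a free curve the resolution in Definition \ref{hom} has $m=2$ and $d_1+d_2=d-1$, and reading off the stable value of the Hilbert function of $M(f)$ gives $\tau(C)=(d-1)^2-d_1d_2$, which is \eqref{duPles}; the observation that $d_1\le d_2$ forces $r\le (d-1)/2$ is also fine. (A small remark: a common factor of the entries of a minimal-degree syzygy already contradicts minimality because $S$ is a domain; the Euler relation is not needed there.)

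The converse, however, has a genuine gap. You propose to take as a black box only the inequality $\tau(C)\le (d-1)^2-r(d-r-1)$ and to deduce from the equality case that $N(f)=0$ by a ``Hilbert-function squeeze'' based on the exact low-degree values of $\dim M(f)_k$ and the self-duality $\dim N(f)_k=\dim N(f)_{3(d-2)-k}$. This mechanism cannot close the argument. The exact formula $\dim M(f)_k=\binom{k+2}{2}-3\binom{k-d+3}{2}$ holds only for $k\le d+r-2$, and in that range it lies at or below the target value: a direct computation gives $\dim M(f)_{d+r-2}-\bigl((d-1)^2-r(d-r-1)\bigr)=-\tfrac12 (d-2r-1)(d-2r-2)\le 0$. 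Since $\dim N(f)_k=\dim M(f)_k-\dim (S/\widehat{J_f})_k$ and the only a priori control on the saturated piece is $\dim (S/\widehat{J_f})_k\le\tau(C)$, this data yields only \emph{lower} bounds for $\dim N(f)_k$, and the duality transports lower bounds to lower bounds; to prove $N(f)=0$ you need \emph{upper} bounds, i.e.\ a lower bound of $\dim (S/\widehat{J_f})_k$ by $\dim M(f)_k$, which is essentially what is to be proved. Moreover the degrees where vanishing must actually be forced, around $k\approx 3(d-2)/2$, are not even reached by your exact formula once $d-2r\ge 3$. The implication ``equality $\Rightarrow$ freeness'' is precisely the nontrivial content of the theorem: du Plessis and Wall obtain it from their discriminant/polar-map analysis, which proves more than the bare inequality, and Dimca's syzygy-theoretic proof \cite{Dimca1} needs further inputs (control of the behaviour of $k\mapsto\dim N(f)_k$ up to the middle degree, and a finer analysis of the possible minimal resolutions). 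So either invoke the full equality statement from the literature --- in which case nothing remains to be proved --- or supply one of these arguments; as written, the converse direction is not established, and calling it ``routine bookkeeping'' misplaces where the real difficulty sits.
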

 The reader can easily use the above criterion to verify, once again, that the fundamental triangle $\mathcal{L} : xyz=0$ is free.

Finally, let us define the second most important class of curves in our considerations, namely nearly-free curves.
\begin{definition}
A reduced plane curve $C \subset \mathbb{P}^{2}_{\mathbb{C}}$ of degree $d$ is nearly-free if $C$ is $3$-syzygy with $d_{1}+d_{2}=d$ and $d_{2}=d_{3}$.
\end{definition}
This definition is somehow complicated to handle, but we have a very useful result due to Dimca \cite{Dimca1}, which allows us to check the nearly-freeness using the information about the total Tjurina number and the minimal degree of the Jacobian relations.
\begin{theorem}
A reduced curve $C : f=0$ in $\mathbb{P}^{2}_{\mathbb{C}}$ of degree $d$ with $d_{1} = {\rm mdr}(f)$ is nearly free if and only if
\begin{equation}
(d-1)^{2}-d_{1}(d-d_{1}-1)=\tau(C)+1.
\end{equation}   
\end{theorem}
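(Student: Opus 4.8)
The forward implication is a Hilbert--series computation. Assume $C$ is nearly free: by definition $M(f)=S/J_{f}$ admits a minimal graded free resolution of the shape in Definition~\ref{hom} with $m=3$, exponents $d_{1}\le d_{2}=d_{3}$ and $d_{1}+d_{2}=d$, and a single last term $S(-e_{1})$. Write the Hilbert series of $M(f)$ as $P(t)/(1-t)^{3}$, where $P(t)=1-3t^{d-1}+\sum_{i=1}^{3}t^{d-1+d_{i}}-t^{e_{1}}$ collects the graded shifts appearing in the resolution (with alternating signs). Since ${\rm Sing}(C)$ is zero--dimensional, $M(f)$ has Krull dimension $1$, so $(1-t)^{2}$ divides $P(t)$; the relation $P'(1)=0$ then forces $e_{1}=d_{1}+d_{2}+d_{3}$, while the standard fact that $\dim_{\mathbb{C}}M(f)_{k}=\tau(C)$ for $k\gg 0$ identifies the (constant) Hilbert polynomial and gives $\tau(C)=\tfrac12 P''(1)$. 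A short manipulation rewrites this as
\[
\tau(C)=(d-1)(d_{1}+d_{2}+d_{3})-(d_{1}d_{2}+d_{1}d_{3}+d_{2}d_{3}),
\]
an identity valid for every $3$-syzygy curve; substituting $d_{1}+d_{2}=d$ and $d_{2}=d_{3}$ yields $(d-1)^{2}-d_{1}(d-d_{1}-1)-1$, which is exactly the asserted equation. This direction is routine once the bookkeeping is set up.

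For the converse, assume $d_{1}={\rm mdr}(f)$ and $(d-1)^{2}-d_{1}(d-d_{1}-1)=\tau(C)+1$, and put $\tau_{\max}:=(d-1)^{2}-d_{1}(d-d_{1}-1)$, so $\tau(C)=\tau_{\max}-1$. I would first use the du Plessis--Wall estimates to reduce to the range $2d_{1}\le d$: in the opposite regime $2d_{1}>d$ those estimates push $\tau(C)$ strictly below $\tau_{\max}-1$, so the hypothesis already forces $2d_{1}\le d$. In this range $C$ is not free --- by Theorem~\ref{dp} when $2d_{1}\le d-1$ (since $\tau(C)\ne\tau_{\max}$), and automatically when $2d_{1}=d$, as free curves have ${\rm mdr}(f)\le(d-1)/2$. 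Hence $C$ is an $m$-syzygy curve with $m\ge 3$ and first exponent $d_{1}$.

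It remains to show $m=3$ with $d_{1}+d_{2}=d$ and $d_{2}=d_{3}$, and this is the heart of the argument. When $m=3$ one can finish by hand: the $3$-syzygy identity above, the relation $e_{1}=d_{1}+d_{2}+d_{3}$, and the requirement that the resolution be minimal with torsion--free second syzygy module ${\rm Syz}(J_{f})\subset S^{3}$ (which forces $d_{2}+d_{3}\ge d$) together leave the nearly-free exponents as the only integer solution of $\tau(C)=\tau_{\max}-1$. The genuine obstacle is excluding $m\ge 4$, where the higher shifts $e_{1},\dots,e_{m-2}$ are no longer pinned down by $P'(1)=0$ alone. I would handle this by passing to the finite--length self--dual defect module $N(f)=I_{f}/J_{f}$: by the Dimca--Sticlaru criterion, $C$ is nearly free precisely when $N(f)\ne 0$ and $\dim_{\mathbb{C}}N(f)_{k}\le 1$ for all $k$, and there is an identity (a further consequence of the du Plessis--Wall analysis) that bounds $\tau_{\max}-\tau(C)$ in terms of the Hilbert function of $N(f)$. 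Since $C$ is not free and $\tau_{\max}-\tau(C)=1$, this should force $N(f)$ to be one--dimensional, hence $C$ nearly free; pinning down the exact form of this $\tau$-versus-$N(f)$ relation, and thereby ruling out all $m\ge4$ resolutions, is where the real work lies.
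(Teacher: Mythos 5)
First, note that the paper itself offers no proof of this statement: it is quoted from du Plessis--Wall/Dimca, with the nearly-free criterion attributed to \cite{Dimca1}, so the only meaningful comparison is with that cited argument --- which your converse strategy (via the defect module $N(f)=I_f/J_f$, $I_f$ the saturation of $J_f$) essentially mirrors. Your forward direction is correct and complete in outline: for a $3$-syzygy curve the pole order of the Hilbert series of $M(f)$ forces $e_1=d_1+d_2+d_3$, the constant value of the Hilbert polynomial is $\tau(C)$, and the resulting identity $\tau(C)=(d-1)(d_1+d_2+d_3)-(d_1d_2+d_1d_3+d_2d_3)$ specializes, under $d_1+d_2=d$, $d_2=d_3$, to the stated formula.

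The converse, however, has a genuine gap, and you name it yourself: everything hinges on excluding $m\geq 4$, equivalently on showing that $\tau(C)=\tau_{\max}-1$ forces $\dim_{\mathbb C}N(f)_k\leq 1$ for all $k$, and you only assert that ``there is an identity'' bounding $\tau_{\max}-\tau(C)$ by the Hilbert function of $N(f)$ ``which should force'' this. That identity --- in \cite{Dimca1} the fact that, for $2\,{\rm mdr}(f)\leq d$, the defect $\tau_{\max}-\tau(C)$ is computed exactly by the maximal value of $\dim N(f)_k$, proved via the self-duality of $N(f)$ and the du Plessis--Wall analysis --- is precisely the substance of the theorem; invoking its existence is circular as a proof. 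A secondary problem sits in your ``finish by hand'' for $m=3$: the constraint you invoke, $d_2+d_3\geq d$, is too weak. Writing $d_2=d-d_1+a$, $d_3=d-d_1+b$, the equation $\tau(C)=\tau_{\max}-1$ reduces to $(a+1)(b+1)=1$, which has the spurious integer solution $a=b=-2$, i.e.\ $d_2=d_3=d-d_1-2$; this satisfies $d_2+d_3\geq d$ whenever $d\geq 2d_1+4$, so it is not excluded by your argument. What rules it out is the sharper known inequality for non-free $3$-syzygy curves, namely $d_1+d_2\geq d$ (with equality exactly in the plus-one generated case), which you would need to quote or prove. The reduction to $2d_1\leq d$ via the refined du Plessis--Wall bound, and the non-freeness of $C$ via Theorem \ref{dp}, are fine; but as it stands the proposal proves only the easy implication.
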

\section{Weak-combinatorics versus strong combinatorics and \textbf{NTC}}
In this section we elaborate about the notion of the weak-combinatorics and strong combinatorics that can be attached to a reduced plane curve. We start with the strong combinatorics that is defined for line arrangements.
\begin{definition}
Let $\mathcal{L} \subset \mathbb{P}^{2}_{\mathbb{C}}$ be an arrangement of $d$ lines. Then by the intersection lattice $L(\mathcal{L})$ we mean the set of all flats, i.e., non-empty intersections of (sub)families of lines in $\mathcal{L}$ with the order defined by the reverse inclusion, i.e., $X \leq Y$ if and only if $Y \subset X$.
\end{definition}
It turns out that the information delivered by the intersection lattice of a given line arrangement $\mathcal{L}$ can be equivalently decoded by the so-called Levi graph. Here we take a step forward and define the notion of the Levi graph for arrangements consisting of smooth curves and admitting arbitrary singularities.
\begin{definition}
Let $\mathcal{C} = \{C_{1}, ..., C_{k}\} \subset \mathbb{P}^{2}_{\mathbb{C}}$ be a reduced curve consisting of $k$ smooth components, each of degree ${\rm deg} \, C_{i} = d_{i} \geq 1$. Then the \emph{Levi graph} $G = (V,E)$ is a bipartite graph with  $V : = V_{1} \cup V_{2} = \{x_{1}, ..., x_{s}, y_{1}, ..., y_{k}\}$, where each vertex $y_{i}$ corresponds to a curve $C_{i}$, each vertex $x_{j}$ corresponds to an intersection point $p_{j} \in {\rm Sing}(\mathcal{C})$ and $x_{j}$ is joined with $y_{i}$ by an edge in $E$ if and only if $p_{j}$ is incident with $C_{i}$.  
\end{definition} 
The above definition is a rather weak notion if we study arbitrary arrangements of smooth plane curves. Consider the following example.
\begin{example}
Let $\mathcal{L}$ be an arrangement of two lines intersecting along a single point, and let $\mathcal{C}$ be an arrangement of two smooth conics intersecting only along one point (which is of course possible). Then we can easily construct the associated Levi graphs, and in both cases these graphs have the same shape, namely
$$G=(\{x_{1}, y_{1}, y_{2}\}, \{\{x_{1},y_{1}\}, \{x_{1},y_{2}\}\}).$$
\end{example}
Obviously the Levi graph does not deliver any information about the types of singularities of a given curve $\mathcal{C}$.
\begin{example}
Let $\mathcal{C} = \{C_{1}, C_{2}\}$ be an arrangement of two smooth conics such that the singular locus of $\mathcal{C}$ consists of three points, two simple nodes $p_{1},p_{2}$ (i.e. $A_{1}$ singularities), and one tacnode $p_{3}$ (i.e. $A_{3}$ singularity). Let $x_{i}$ be the vertex corresponding to point $p_{i}$ with $i \in \{1,2,3\}$. Then the associated Levi graphs has the following form:
$$G=(\{x_{1},x_{2},x_{3}, y_{1}, y_{2}\}, \{\{x_{1},y_{1}\}, \{x_{1},y_{2}\},\{x_{2},y_{1}\},\{x_{2},y_{2}\},\{x_{3},y_{1}\}, \{x_{3},y_{2}\}\}).$$
Obviously, just by looking at $G$ and without any additional data, we cannot distinguish nodes from tacnodes.
\end{example}
Now let us pass to the following fundamental definition for this paper.

\begin{definition}
Let $C = \{C_{1}, ..., C_{k}\} \subset \mathbb{P}^{2}_{\mathbb{C}}$ be a reduced curve such that each irreducible component $C_{i}$ is \textbf{smooth}. The weak-combinatorics of $C$ is a vector of the form $(d_{1}, ..., d_{s}; m_{1}, ..., m_{p})$, where $d_{i}$ denotes the number of irreducible components of $C$ of degree $i$, and $m_{j}$ denotes the number of singular points of a curve $C$ of a given type $M_{j}$.
\end{definition}
In the above definition we refer to types of singularities, which means that for us these types are determined by their local normal forms, e.g. \cite{arnold}.
\begin{example}
Let $\mathcal{C} = \{C_{1}, C_{2}\}$ be an arrangement of two smooth conics such that the singular locus of $\mathcal{C}$ consists of three points, namely two simple nodes and one tacnode. Denote by $n_{2}$ the number of nodes and by $t_{3}$ the number of tacnodes. Then the weak-combinatorics of $\mathcal{C}$ has the following form
$$(d_{2};n_{2},t_{3}) = (2;2,1).$$
\end{example}
Why do we introduce these various notions of combinatorics that can be attached to a given reduced curve $C$? It is all about Terao's freeness conjecture and $\textbf{NTC}$.
We start with the classical Terao's conjecture devoted to line arrangements \cite{terao}.
\begin{conjecture}[Terao]
Let $\mathcal{A},\mathcal{B} \subset \mathbb{P}^{2}_{\mathbb{C}}$ be two line arrangements such that their intersection lattices $L(\mathcal{A}), L(\mathcal{B})$ are isomorphic. Assume that $\mathcal{A}$ is free, then $\mathcal{B}$ must be free.
\end{conjecture}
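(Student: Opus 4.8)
The statement is Terao's conjecture, one of the central open problems in the theory of hyperplane arrangements; what follows is therefore not a complete proof but the strategy that essentially every attack on it shares, together with the precise point at which they all stall.

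First I would pass to the module $\mathrm{AR}(Q)$ of Jacobian syzygies of a defining polynomial $Q$ of $\mathcal{A}$ (equivalently, the module of logarithmic derivations): ``$\mathcal{A}$ free'' means that $\mathrm{AR}(Q)$ is a free $S$-module, necessarily of rank two, and freeness forces a homogeneous basis in degrees $(d_1,d_2)$ with $d_1+d_2=d-1$ and $d_1=\mathrm{mdr}(Q)$. The one genuinely combinatorial ingredient available is Terao's factorization theorem: if $\mathcal{A}$ is free with exponents $(d_1,d_2)$, then the characteristic polynomial factors as $\chi(\mathcal{A},t)=(t-1)(t-d_1)(t-d_2)$, and $\chi(\mathcal{A},t)$ is computed from $L(\mathcal{A})$ alone. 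Thus the isomorphism $L(\mathcal{A})\cong L(\mathcal{B})$ hands us, for free, a candidate pair $(d_1,d_2)$ which must be the exponents of $\mathcal{B}$ if $\mathcal{B}$ is free at all; moreover $\tau(\mathcal{B})=\tau(\mathcal{A})$, since for a line arrangement the local Tjurina numbers are determined by the multiplicities of the intersection points and these are recorded in the lattice. By the du Plessis--Wall criterion of Theorem \ref{dp}, the whole problem then collapses to showing $\mathrm{mdr}(\mathcal{B})=\mathrm{mdr}(\mathcal{A})$, i.e.\ that $\mathrm{mdr}$ is itself a combinatorial invariant.

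To attack that, I would move to the rank-three situation and invoke Yoshinaga's criterion: a line arrangement is free with prescribed exponents $(d_1,d_2)$ if and only if $\chi$ has the expected factorization \emph{and} the Ziegler restriction to a generic line --- a rank-two multiarrangement whose multiplicities are exactly the local intersection numbers of $\mathcal{A}$ along that line --- has exponents $(d_1,d_2)$. Rank-two multiarrangements are always free, so the only real content is the matching of exponents. Since both the factorization of $\chi$ and the Ziegler multiplicities are encoded in $L(\mathcal{A})\cong L(\mathcal{B})$, one would like to conclude that $\mathcal{B}$ automatically passes Yoshinaga's test, hence is free with the same exponents as $\mathcal{A}$.

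The hard part is precisely this last step, and it is why no such argument has ever gone through: the exponents of the Ziegler restriction of a line arrangement are \emph{not} known to depend only on the intersection lattice --- equivalently, $\mathrm{mdr}$ is not known to be combinatorial. This is exactly the gap exploited by Ziegler pairs (arrangements with isomorphic lattices but differing homological behaviour, with both members non-free, so that Terao's conjecture itself is not contradicted), and it is the same gap that makes \textbf{NTC} so mysterious. So the realistic output of this plan is not a theorem but a clean reduction: Terao's conjecture for line arrangements is equivalent to the assertion that $\mathrm{mdr}$ --- or, what amounts to the same, the exponents of the Ziegler restriction --- is determined by the intersection lattice, and any honest attempt must concentrate all of its force there.
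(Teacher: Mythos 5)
This statement is Terao's conjecture, which the paper records as an open problem and does not (and cannot) prove; there is therefore no ``paper proof'' to compare against, and you are right not to claim one. Your sketch of the standard attack is essentially accurate: the lattice determines the characteristic polynomial (hence candidate exponents via Terao factorization), it determines $\tau$ because ordinary singularities of line arrangements are quasi-homogeneous with $\tau_p=(m_p-1)^2$, and then Theorem \ref{dp} together with the maximality of the Tjurina number for free curves reduces everything to controlling ${\rm mdr}$; Yoshinaga's criterion and the Ziegler restriction are indeed where the non-combinatorial input hides. This is exactly the circle of ideas the paper itself gestures at in its remark on line arrangements with only double and triple points.

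One correction to your final ``clean reduction'': Terao's conjecture is \emph{not} equivalent to the assertion that ${\rm mdr}$ is determined by the intersection lattice. That assertion is already false --- Ziegler's original pair (and the new pairs constructed in the last section of the paper) have isomorphic lattices and different ${\rm mdr}$, yet they do not touch Terao's conjecture because neither member is free. The correct reduction, which follows from Theorem \ref{dp} plus the inequality ${\rm mdr}(\mathcal{B})\le{\rm mdr}(\mathcal{A})$ forced by $\tau(\mathcal{B})=\tau(\mathcal{A})$ and the du Plessis--Wall bound, is the weaker statement: if $\mathcal{A}$ is free and $L(\mathcal{A})\cong L(\mathcal{B})$, then ${\rm mdr}(\mathcal{B})={\rm mdr}(\mathcal{A})$; equivalently, no Ziegler pair can contain a free arrangement. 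So the force of any attempt must be concentrated not on making ${\rm mdr}$ combinatorial in general (hopeless), but on showing that freeness of one member rigidifies ${\rm mdr}$ across the lattice-isomorphism class --- which is precisely why the known Ziegler pairs, all consisting of non-free arrangements, leave the conjecture intact and why the paper's hunt for new Ziegler pairs is relevant to it.
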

Notice that Terao's freeness conjecture is widely open -- we know that it holds with up to $14$ lines \cite{BarKuh}, which might be slightly disappointing. However, classical Terao's conjecture is very demanding and in order to understand this problem well one needs to understand the geometry of moduli spaces of line arrangements that are free. It is very unclear whether the mentioned conjecture holds in general, and based on that problem Dimca and Sticlaru in \cite{DimcaSticlaru} defined the class of nearly-free curves. Shortly afterwards it turned out that nearly-free curves might be crucial for Terao's conjecture. It is believed that if there exists a counterexample to Terao's freeness conjecture, then we should be able to find two line arrangements with isomorphic intersection lattices, where one arrangement is free and the second arrangement is nearly-free. Being aware of such fundamental problems, we can wonder whether there is a natural way to extend the setting designed for Terao's freeness conjecture to higher degree curves, for instance to the case of conic-line arrangements. We have the following example that comes from \cite{STo}.
\begin{example}[Counterexample to a naive Terao's conjecture]
Consider the following conic-line arrangement
$$\mathcal{CL}_{1} \, : \, xy\cdot(y^{2} + xz)\cdot(y^{2}+x^{2} + 2xz)=0.$$
Observe that the intersection point $P = (0:0:1)$ has multiplicity $4$. Moreover, it is quasi-homogeneous, but not \textbf{ordinary}. Using \verb}SINGULAR}, we can check that $P$ is a singular point of type $Z_{1,0}$ with $\tau_{P}=\mu_{P}=15$. One can show that $\mathcal{CL}_{1}$ is free with exponents $(2,3)$. If we perturb a bit line $y=0$, taking for instance $x-13y = 0$, we get a new conic-line arrangement 
$$\mathcal{CL}_{2} \, : \, x\cdot(x-13y)\cdot(y^{2} + xz)\cdot(y^{2}+x^{2} + 2xz)=0.$$
In this new arrangement, the intersection point $P=(0:0:1)$ has multiplicity $4$, but it is not longer quasi-homogeneous, and $\mathcal{CL}_{2}$ is not free. In fact, the arrangement $\mathcal{CL}_{2}$ is nearly-free with exponents $(d_{1},d_{2},d_{3}) = (3,3,3)$. Obviously the Levi graphs of $\mathcal{CL}_{1}$, $\mathcal{CL}_{2}$ are isomorphic.
\end{example}
The example presented above is crucial. Firstly, we see that (strong) combinatorics for curve arrangements, i.e., where the irreducible components are not just lines, does not determine the freeness, so the classical Terao's conjecture cannot be naively extended. On the other hand, the above example shows that the notion of weak-combinatorics might allow us to distinguish arrangements in the context of the freeness property. From this perspective, it is natural to ask whether the following conjecture can hold.

\begin{conjecture}[Numerical Terao's Conjecture]
Let $C_{1}, C_{2}$ be two reduced curves in $\mathbb{P}^{2}_{\mathbb{C}}$ with the property that they possess only smooth irreducible components and they admit only quasi-homogeneous singularities. Suppose that $C_{1}$ and $C_{2}$ have the same weak-combinatorics and let $C_{1}$ be free, then $C_{2}$ is also free.
\end{conjecture}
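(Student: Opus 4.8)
Since the Numerical Terao's Conjecture is open as stated (it already fails for line arrangements, so any proof must be sensitive to the presence of higher-degree smooth components), what I would write here is a strategy that disposes of a genuine special case and isolates the essential difficulty. The plan is to reduce the question, via the du Plessis--Wall criterion, to a statement about two numerical invariants, to settle the case of small ${\rm mdr}$ unconditionally, and then to confront the real obstruction.

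\emph{Reduction to two numerical invariants.} The weak-combinatorics $(d_{1},\dots,d_{s};m_{1},\dots,m_{p})$ of $C_{i}$ determines $\deg C_{i}=\sum_{i} i\cdot d_{i}=:d$, so $C_{1}$ and $C_{2}$ have a common degree $d$. It also records, with multiplicity $m_{j}$, the analytic type $M_{j}$ of each singular point, and since $\tau_{p}$ depends only on this type — and, under the quasi-homogeneity hypothesis, equals $\mu_{p}$ — the total Tjurina number $\tau(C_{i})=\sum_{j} m_{j}\tau(M_{j})$ is a common value $\tau$. Thus the whole problem becomes: does the pair $(d,\tau)$ together with freeness of $C_{1}$ force freeness of $C_{2}$?

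\emph{The du Plessis--Wall squeeze.} Write $r_{i}={\rm mdr}(f_{i})$. A free curve has exponents $(d_{1},d_{2})$ with $d_{1}\le d_{2}$ and $d_{1}+d_{2}=d-1$, so $r_{1}=d_{1}\le(d-1)/2$ and, by Theorem~\ref{dp}, $(d-1)^{2}-r_{1}(d-r_{1}-1)=\tau$. To prove $C_{2}$ free it suffices, again by Theorem~\ref{dp}, to produce $r_{2}\le(d-1)/2$ with $(d-1)^{2}-r_{2}(d-r_{2}-1)=\tau$. Feeding this value of $\tau$ into the du Plessis--Wall inequalities — the upper bound $\tau\le(d-1)^{2}-r(d-r-1)$ for $r\le(d-1)/2$, the lower bound $\tau\ge(d-1)(d-r-1)$, and the smaller upper bound valid for $r\ge d/2$ (incompatible with so large a $\tau$) — one gets $r_{1}-\tfrac{r_{1}^{2}}{d-1}\le r_{2}\le r_{1}$. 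When $r_{1}^{2}<d-1$ this pins $r_{2}=r_{1}$, so the du Plessis--Wall upper bound is attained for $C_{2}$ and $C_{2}$ is free. Hence \textbf{NTC} would hold unconditionally whenever the free curve $C_{1}$ satisfies ${\rm mdr}(f_{1})^{2}<\deg C_{1}-1$; the remaining cases are those of comparatively large ${\rm mdr}$.

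\emph{The obstruction.} What breaks for larger ${\rm mdr}$ is that ${\rm mdr}(f)$ — equivalently the length of the Jacobian module $N(f)=\widehat{J_{f}}/J_{f}$ — is not a function of the weak-combinatorics: a single syzygy of unexpectedly low degree can be forced by the mutual position of the components, and this is exactly the mechanism behind Ziegler-type pairs of line arrangements, where the conjecture is false; note that those counterexamples live precisely in the range ${\rm mdr}^{2}\gtrsim d$ left open above. A proof of the outstanding cases must therefore bring in an ingredient with no analogue for line arrangements. The realistic candidates are: (i) Tjurina bounds that are sharper under quasi-homogeneity (maximal Tjurina and plus-one-generated curves, together with the self-duality of $N(f)$), which could upgrade the $r_{1}^{2}/(d-1)$ slack above to an exact squeeze; (ii) a geometric translation of ``${\rm mdr}(f_{2})$ is small'' into the presence of a low-degree pencil or auxiliary curve through which $C_{2}$ factors, plus an argument that such a configuration is already detected by the weak-combinatorics; (iii) a deformation argument linking $C_{1}$ to $C_{2}$ within the equisingular stratum — but here semicontinuity of ${\rm mdr}$ points the wrong way, which is the technical core of the difficulty. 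I expect a combination of (i) and (ii) to be the viable route, and I expect the decisive obstacle to be ruling out an ``accidental'' low-degree syzygy for $C_{2}$: proving that no arrangement of smooth components of mixed degrees with quasi-homogeneous singularities can play the role that Ziegler's configurations play for lines.
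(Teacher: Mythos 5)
You were asked about the Numerical Terao's Conjecture itself, and the paper contains no proof of it --- it is the open problem the whole survey is organized around --- so there is no argument of record to compare yours against, and you were right not to manufacture one. One point you half-acknowledge should be stated plainly: as literally formulated (smooth irreducible components, quasi-homogeneous singularities), the hypotheses are satisfied by line arrangements, so the paper's own Theorem \ref{pair} (the Marchesi--Vall\'es triangular pair: $d=15$, same weak-combinatorics, one arrangement free with exponents $(7,7)$, the other nearly-free) already refutes the statement as written; the conjecture is only open, and only interesting, once at least one component has degree $\ge 2$. Any purported proof must therefore break for line arrangements, which is exactly the sanity check you apply at the end.

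As for the content you do prove: the reduction and the squeeze are correct. The weak-combinatorics determines $d$, and quasi-homogeneity gives $\tau_p=\mu_p$ depending only on the local normal form, so $\tau(C_1)=\tau(C_2)=\tau$; for the free curve $\tau=(d-1)^2-r_1(d-r_1-1)$ with $r_1\le (d-1)/2$. The improved du Plessis--Wall bound for $2r\ge d$ does exclude $r_2\ge d/2$ (the correction term $\binom{2r_2-d+2}{2}$ pushes the bound strictly below the maximum of $(d-1)^2-r(d-r-1)$), and then the two-sided bounds give $r_1-r_1^2/(d-1)\le r_2\le r_1$, hence $r_2=r_1$ and freeness of $C_2$ by Theorem \ref{dp} whenever $r_1^2<d-1$. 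This is sound, and it is the same mechanism the paper itself invokes in its remark on line arrangements with double and triple points (where only the upper bound, i.e.\ $r'\le r$ from maximality of $\tau$ for free curves, is used before falling back on classification); your addition of the lower bound to pin $r_2$ exactly is in line with known results of Dimca--Sticlaru on freeness versus maximal Tjurina number, and your observation that the Marchesi--Vall\'es counterexample sits in the complementary regime ($r_1^2=49\ge 14=d-1$) is accurate. One small slip in the speculative part: ${\rm mdr}(f)$ is not ``equivalently the length of the Jacobian module $N(f)$'' --- freeness is $N(f)=0$ and nearly-freeness is controlled by $\nu(C)=\max_k \dim N(f)_k$, but ${\rm mdr}$ is a different invariant --- though nothing in your argument rests on that identification.
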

The assumption that we consider only quasi-homogeneous singularities is technical and not intuitive at first sight. \textbf{If we assume that our curves admit only smooth irreducible components and quasi-homogeneous singularities, then the total Tjurina number is determined by the weak-combinatorics}. For instance, it is well-known that ordinary intersection points for curve arrangements of multiplicity $m>4$ are, in general, not quasi-homogeneous, see \cite[Example 4.2]{STo} or \cite[Exercise 7.31]{RCS}. The assumption that all singularities are quasi-homogeneous allows us, for instance, to compute the local Tjurina number of such ordinary singular points using only Milnor's formula involving the number of branches and the intersection indices.

In the forthcoming two sections we collect all recent developments regarding \textbf{NTC}. First of all, we start our discussion in the setting of line arrangements, and then we jump into the scenario of conic-line arrangements in the complex plane.

\subsection{\textbf{NTC} versus triangular line arrangements}

This section is based on \cite{Mar}. Here we want to show that \textbf{NTC} is false in the class of line arrangements in the complex plane. In order to construct the mentioned counterexample, we need to introduce a special class of line arrangements, the so-called triangular arrangements in the complex projective plane.
\begin{definition}
An arrangement $\mathcal{L}$ of $abc$ lines is called \textbf{triangular} if $\mathcal{L}$ is given by an equation of type
$$Q(x,y,z) = xyz \prod_{i=1}^{a-1}(x-\alpha_{i}y)\cdot  \prod_{j=1}^{b-1}(y-\beta_{j}z) \cdot  \prod_{i=k}^{c-1}(x-\gamma_{k}z)=0,$$
where $\alpha_{i},\beta_{j},\gamma_{k} \in \mathbb{C}$.
\end{definition}
The above definition is motivated by the well-known example of a supersolvable line arrangement, called sometimes the full $n$th CEVA arrangement, or the full monomial arrangement, and we denote it as $\mathcal{FMA}_{n}$. This arrangement is given by the following equation depending on $n\geq 2$, namely
$$F_{n}(x,y,z) =  xyz(x^{n}-y^{n})(y^{n}-z^{n})(x^{n}-z^{n})=0.$$
Here we want to present the following result that is proved in \cite[Theorem 6.2]{Mar}.
\begin{theorem}
\label{pair}
There exists a pair of line arrangements in the complex projective plane, each of which has weak-combinatorics of the form
$$(d_{1};n_{2},n_{3},n_{4},n_{5},n_{6}) = (15; 24 ,12, 0, 0, 3),$$
where $n_{i}$ denotes the number of (ordinary) $i$-fold intersection points, such that one is free and the second is nearly-free.
\end{theorem}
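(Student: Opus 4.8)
The plan is to exhibit the pair explicitly as a perturbation of a triangular arrangement, in the spirit of the counterexample to the naive Terao's conjecture discussed above, and then verify the numerical data and the (nearly-)freeness via the du Plessis–Wall criterion (Theorem \ref{dp}) together with Dimca's nearly-freeness criterion. First I would look for a free triangular arrangement $\mathcal{L}_1$ of $d=15$ lines whose singularities are only ordinary points, with exactly $24$ nodes, $12$ triple points and $3$ sextuple points; the natural candidate is built from the combinatorics of the full monomial arrangement $\mathcal{FMA}_n$ (here the three coordinate vertices of the fundamental triangle are forced to be the high-multiplicity points), suitably truncated and specialized so that $15 = 3 + 3\cdot 4$ lines arrange into three pencils of $4$ lines through the three vertices plus the three coordinate lines — one checks that such a configuration has $3$ points of multiplicity $6$ (the vertices, each on the two relevant pencils plus the coordinate lines), and the remaining intersections among the "diagonal" lines can be arranged to give precisely $12$ triples and $24$ nodes by choosing the slopes $\alpha_i,\beta_j,\gamma_k$ generically subject to exactly the prescribed number of concurrences.

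Next I would compute the total Tjurina number from the weak-combinatorics: since all singularities are ordinary and quasi-homogeneous, $\tau_p$ at an ordinary $m$-fold point equals $(m-1)^2$, so
\[
\tau(\mathcal{L}_1) = 24\cdot 1 + 12\cdot 4 + 3\cdot 25 = 24 + 48 + 75 = 147.
\]
For $\mathcal{L}_1$ to be free with $d=15$ we need exponents $(d_1,d_2)$ with $d_1+d_2 = 14$ and, by Theorem \ref{dp} with $r=d_1=\mathrm{mdr}(f)\le 7$,
\[
(d-1)^2 - r(d-r-1) = 196 - r(14-r) = 147,
\]
giving $r(14-r) = 49$, hence $r = 7$ and exponents $(7,7)$. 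So the target is a free arrangement with $\mathrm{mdr} = 7$. I would then verify freeness either by producing two explicit independent syzygies of degree $7$ and applying Saito's criterion, or — more robustly for a whole family — by a \verb}SINGULAR} computation of the minimal free resolution of $M(f)$; the supersolvable-type structure (three points of high multiplicity through which many lines pass) gives a good chance of an explicit modular/logarithmic derivation of degree $7$ coming from the pencil structure.

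Then I would construct $\mathcal{L}_2$ by perturbing $\mathcal{L}_1$: move one of the $15$ lines slightly so that it no longer passes through the concurrences it participated in, while keeping all multiplicities of the distinguished points and the total count of node/triple/sextuple points unchanged — concretely, replace a diagonal line $x-\alpha_i y$ by $x-\alpha_i' y$ for generic $\alpha_i'$, absorbing the lost incidences into new nodes so that the weak-combinatorics $(15;24,12,0,0,3)$ is preserved. Since the weak-combinatorics (hence $\tau$) is unchanged, $\tau(\mathcal{L}_2)=147$ as well; I would then show $\mathcal{L}_2$ is \emph{not} free — equivalently that $\mathrm{mdr}(\mathcal{L}_2)\neq 7$, which already follows if one exhibits a Jacobian syzygy of degree $6$, forcing $d_1\le 6$ and then, by Theorem \ref{dp}, $196 - 6\cdot 8 = 148 \neq 147$, so non-free — and finally that it is nearly-free by checking $196 - d_1(14-d_1) = \tau+1 = 148$ with $d_1 = 6$, i.e. exponents $(6,8,8)$ (note $196-48=148$). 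The main obstacle will be the combinatorial bookkeeping in step one: arranging $15$ lines so that the incidence counts come out \emph{exactly} $(24,12,3)$ with no stray higher-multiplicity points and simultaneously so that $\mathrm{mdr}$ jumps from $7$ down to $6$ under the perturbation — i.e. finding a perturbation that changes the homological type without changing the weak-combinatorics. This is precisely the delicate point, and I expect it to require a careful, computer-assisted search over slopes rather than a closed-form family; verifying the two resolutions in \verb}SINGULAR} for the specific chosen coordinates then closes the argument.
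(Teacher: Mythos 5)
Your numerical reduction is sound and matches the paper's data: with all singularities ordinary and quasi-homogeneous, $\tau = 24\cdot 1 + 12\cdot 4 + 3\cdot 25 = 147$, so by du Plessis--Wall any arrangement with this weak-combinatorics is free exactly when ${\rm mdr}=7$ (exponents $(7,7)$) and nearly-free exactly when ${\rm mdr}=6$. (One slip: the nearly-free exponents are $(6,9,9)$, not $(6,8,8)$, since for nearly-free curves $d_1+d_2=d=15$.) Your candidate for the free arrangement is also essentially the paper's: $\mathcal{L}_1$ there is obtained by deleting six lines from the full monomial arrangement $\mathcal{FMA}_6$, which leaves precisely the triangle plus three pencils of four lines through the coordinate vertices, with $3$ sextuple, $12$ triple and $24$ double points, and it is checked to be free with exponents $(7,7)$.

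The genuine gap is in your construction of the second arrangement. Moving a single diagonal line $x-\alpha_i y$ to a generic position $x-\alpha_i' y$ destroys the triple points that line passed through, so $n_3$ drops and $n_2$ rises; ``absorbing the lost incidences into new nodes'' therefore cannot preserve $(24,12,3)$. To keep the weak-combinatorics you would need the moved line to pass again through exactly as many new triple points of the remaining configuration --- a strongly non-generic incidence condition --- and simultaneously force ${\rm mdr}$ to drop from $7$ to $6$; you neither exhibit such a line nor argue it exists, and since the theorem is an existence statement this is the heart of the proof, which your plan defers to an unperformed search. The paper sidesteps this entirely: its nearly-free partner $\mathcal{L}_2$ is not a perturbation of $\mathcal{L}_1$ at all, but a different explicit arrangement obtained by deleting the three lines $x-z=0$, $x-y=0$, $y-z=0$ from the full monomial arrangement $\mathcal{FMA}_5$; this again consists of the triangle plus three pencils of four lines, hence has the same weak-combinatorics (though a different intersection lattice), and a direct computation shows it is nearly-free with exponents $(6,9,9)$. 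So the correct move is to produce a second explicit realization of the same incidence counts from a different source configuration, rather than to deform the first one while trying to conserve its counts.
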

\begin{proof}
The first line arrangement $\mathcal{L}_{1}$ is constructed by removing from the full monomial arrangement $\mathcal{FMA}_{6}$ defined by
$$F_{6}(x,y,z) = xyz(x^{6}-y^{6})(y^{6}-z^{6})(x^{6}-z^{6}) =0$$
the following lines
$$\ell_{1} : x-z=0, \quad \ell_{2} : x-ez=0, \quad \ell_{3} : y-z=0, $$
$$\ell_{4} : y-ez=0, \quad \ell_{1} : x-e^{2}y=0, \quad \ell_{6} : x-e^{4}y=0,$$
where $e^{2}-e+1=0$.
We can check, using \verb}SINGULAR}, that $\mathcal{L}_{1}$ is free with exponents $(d_{1},d_{2}) = (7,7)$. 

The second line arrangement $\mathcal{L}_{2}$ is constructed as follows. Take the full monomial arrangement $\mathcal{FMA}_{5}$ given by 
$$F_{5}(x,y,z) = xyz(x^{5}-y^{5})(y^{5}-z^{5})(x^{5}-z^{5}) =0$$
and remove the following three lines
$$\ell_{1} : x-z = 0, \quad \ell_{2} : x-y = 0, \quad \ell_{2} : y-z = 0.$$
We can check that $\mathcal{L}_{2}$ is not free, but only nearly-free with exponents $(d_{1},d_{2},d_{3}) = (6,9,9)$.
\end{proof}
From the above result we can extract the following crucial observations.
\begin{corollary}
The Numerical Terao's conjecture does not hold in the class of triangular line arrangements in the complex projective plane.
\end{corollary}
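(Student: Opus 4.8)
The plan is to deduce the Corollary directly from Theorem \ref{pair}, together with two elementary observations. First, recall that \textbf{NTC} applies to any pair of reduced curves whose irreducible components are smooth and whose singularities are all quasi-homogeneous. A line arrangement automatically meets both hypotheses: its components are lines, hence smooth of degree one, and all of its singular points are ordinary multiple points, which are quasi-homogeneous. Consequently, to contradict \textbf{NTC} inside the class of triangular line arrangements it suffices to produce two \emph{triangular} arrangements with the same weak-combinatorics, one free and the other not free.

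First I would check that the two arrangements $\mathcal{L}_1$ and $\mathcal{L}_2$ appearing in the proof of Theorem \ref{pair} are triangular in the sense of the definition above. Each full monomial arrangement $\mathcal{FMA}_n : xyz(x^n-y^n)(y^n-z^n)(x^n-z^n)=0$ is triangular, since $x^n-y^n=\prod_i(x-\zeta^i y)$, $y^n-z^n=\prod_j(y-\zeta^j z)$ and $x^n-z^n=\prod_k(x-\zeta^k z)$ for $\zeta$ a primitive $n$-th root of unity, so $\mathcal{FMA}_n$ has exactly the shape required with $a=b=c=n+1$. Deleting from $\mathcal{FMA}_n$ any lines of the forms $x-\alpha y=0$, $y-\beta z=0$ or $x-\gamma z=0$ — which is precisely what is done in the construction of $\mathcal{L}_1$ and $\mathcal{L}_2$, where none of the coordinate lines $x=0$, $y=0$, $z=0$ is removed — again yields an equation of triangular type. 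Hence both $\mathcal{L}_1$ and $\mathcal{L}_2$ are triangular.

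Next I would record that a nearly-free curve is never free: by definition a free curve is $2$-syzygy while a nearly-free curve is genuinely $3$-syzygy, so the two minimal graded free resolutions of the Milnor algebra have different shapes and the two classes are disjoint. (Alternatively, comparing the du Plessis--Wall identity of Theorem \ref{dp} with Dimca's nearly-freeness identity, the right-hand sides differ by $1$, so no curve can satisfy both.) Therefore the arrangement $\mathcal{L}_2$ of Theorem \ref{pair}, being nearly-free, is not free, whereas $\mathcal{L}_1$ is free; and by Theorem \ref{pair} both have the same weak-combinatorics $(15;24,12,0,0,3)$. This exhibits a failure of \textbf{NTC} inside the class of triangular line arrangements, which is the assertion of the Corollary.

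I do not expect a serious obstacle here: the entire difficulty has already been absorbed into Theorem \ref{pair}. The only point demanding (minimal) care is the bookkeeping that the line deletions performed on the full monomial arrangements never touch the three coordinate lines, so that the triangular form $xyz\prod(x-\alpha_i y)\prod(y-\beta_j z)\prod(x-\gamma_k z)$ is preserved under the construction; once this is noted, the Corollary follows immediately.
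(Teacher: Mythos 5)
Your proposal is correct and follows essentially the same route as the paper, which presents the corollary as an immediate consequence of Theorem \ref{pair}: both arrangements $\mathcal{L}_1$ and $\mathcal{L}_2$ are triangular, share the weak-combinatorics $(15;24,12,0,0,3)$, and one is free while the other is only nearly-free, hence not free. Your additional bookkeeping (that the deleted lines never include the coordinate lines, and that ordinary singularities of line arrangements are quasi-homogeneous) just makes explicit what the paper leaves implicit.
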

\begin{corollary}
In the class of line arrangements in the complex projective plane, the minimal degree of non-trivial Jacobian relations is not determined by the weak-combinatorics!
\end{corollary}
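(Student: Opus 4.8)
The plan is to read the statement off directly from Theorem~\ref{pair}, using the Remark that identifies ${\rm mdr}(f)$ with the smallest exponent $d_{1}$. First I would recall that, for any reduced curve $C:f=0$ in $\mathbb{P}^{2}_{\mathbb{C}}$, the minimal degree of a non-trivial Jacobian relation equals the first entry $d_{1}$ of its exponents; hence it is enough to exhibit two line arrangements with the same weak-combinatorics but different values of $d_{1}$.

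Next I would invoke the pair $\mathcal{L}_{1},\mathcal{L}_{2}\subset\mathbb{P}^{2}_{\mathbb{C}}$ constructed in the proof of Theorem~\ref{pair}. Both arrangements consist of $15$ lines and have exactly $24$ ordinary double points, $12$ ordinary triple points and $3$ ordinary sextuple points, so they share the weak-combinatorics $(d_{1};n_{2},n_{3},n_{4},n_{5},n_{6})=(15;24,12,0,0,3)$. By Theorem~\ref{pair}, $\mathcal{L}_{1}$ is free with exponents $(d_{1},d_{2})=(7,7)$, whereas $\mathcal{L}_{2}$ is nearly-free with exponents $(d_{1},d_{2},d_{3})=(6,9,9)$.

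Combining these two ingredients, I would conclude ${\rm mdr}(\mathcal{L}_{1})=7$ and ${\rm mdr}(\mathcal{L}_{2})=6$. Since $7\neq 6$ while the two arrangements have identical weak-combinatorics, the invariant ${\rm mdr}$ cannot be a function of the weak-combinatorics alone, which is precisely the assertion. The only step that is not purely formal --- and hence the main obstacle --- is the computation of the exponents of $\mathcal{L}_{1}$ and $\mathcal{L}_{2}$ claimed in Theorem~\ref{pair}: this rests on determining the minimal free resolutions of the two Milnor algebras (carried out in \cite{Mar} with \texttt{SINGULAR}), or equivalently on verifying the numerical identity of the du~Plessis--Wall criterion (Theorem~\ref{dp}) for $\mathcal{L}_{1}$ and Dimca's nearly-freeness identity for $\mathcal{L}_{2}$, once $\tau(\mathcal{L}_{i})$ and ${\rm mdr}(\mathcal{L}_{i})$ are known. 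Everything else is a one-line deduction.
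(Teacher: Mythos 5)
Your proposal is correct and follows exactly the paper's route: the corollary is read off from Theorem~\ref{pair}, since $\mathcal{L}_{1}$ and $\mathcal{L}_{2}$ share the weak-combinatorics $(15;24,12,0,0,3)$ while ${\rm mdr}(\mathcal{L}_{1})=d_{1}=7$ and ${\rm mdr}(\mathcal{L}_{2})=d_{1}=6$ by the identification of ${\rm mdr}$ with the first exponent. The paper states the same values explicitly later, in the proposition showing that these two arrangements form a weak Ziegler pair, so nothing further is needed.
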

\subsection{\textbf{NTC} holds for conic-line arrangements with nodes, tacnodes, and ordinary triple points}
In this section we report results devoted to the freeness of conic-line arrangements with simple singularities. More precisely, $\mathcal{CL} = \{\ell_{1}, ...,\ell_{d}, C_{1}, ..., C_{k}\} \subset \mathbb{P}^{2}_{\mathbb{C}}$ be an arrangement of $d\geq 0$ lines and $k\geq 0$ smooth conics admitting only the following quasi-homogeneous singularities (here we use local normal forms from \cite{arnold}):
\begin{center}
\begin{tabular}{ll}
$A_{k}$ with $k\geq 1$ & $: \, x^{2}+y^{k+1}  = 0$, \\
$D_{4}$ & $: \, y^{2}x + x^{3}  = 0$,\\
$X_{9}$ with $a^{2}\neq 4$ & $: \, x^{4} + y^{4} +ax^{2}y^{2}= 0$. 
\end{tabular}
\end{center}
In particular, $A_{1}$ singularities are just nodes, $D_{4}$ singularities are ordinary triple points, and $X_{9}$ singularities are ordinary quadruple points.

Our first result is devoted to the case when we have only smooth conics, see \cite[Theorem A]{Pokora}.
\begin{theorem}
There does not exist any arrangement of $k\geq 2$ smooth conics with $A_{1}, A_{3}, D_{4}, X_{9}$ singularities that is free.
\end{theorem}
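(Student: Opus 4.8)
The plan is to combine the du Plessis--Wall freeness criterion (Theorem~\ref{dp}) with the fact that for a conic-line arrangement with only quasi-homogeneous singularities the total Tjurina number $\tau(\mathcal{C})$ is a purely combinatorial quantity, determined by the numbers $n_2,n_3,n_4$ of $A_1$, $D_4$, $X_9$ points together with the counts of $A_3$ points and the degree. The first step is to fix notation: let $\mathcal{C}=\{C_1,\dots,C_k\}$ be $k\geq 2$ smooth conics, so $d=\deg\mathcal{C}=2k$, and let the singularities be $n_1$ nodes ($A_1$), $n_3$ tacnodes ($A_3$), $n_4'$ triple points ($D_4$) and $n_9$ quadruple points ($X_9$). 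Using the local Tjurina numbers $\tau(A_1)=1$, $\tau(A_3)=3$, $\tau(D_4)=3$, $\tau(X_9)=7$, one writes $\tau(\mathcal{C})=n_1+3n_3+3n_4'+7n_9$. The combinatorial content to exploit is that the conics pairwise meet in intersection multiplicity $4$ (Bézout), so summing local intersection data over all $\binom{k}{2}$ pairs and over the components gives an upper bound for $\tau(\mathcal{C})$ of the shape $\tau(\mathcal{C})\leq 4\binom{k}{2}+(\text{lower order in }k)$; in fact the count of incidences forces $\tau(\mathcal{C})\leq 2k(k-1)+c$ for an explicit small constant $c$.

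The second step is to feed this into the du Plessis--Wall inequality. By Theorem~\ref{dp}, if $\mathcal{C}$ is free with $r=\mathrm{mdr}(f)\leq (d-1)/2=(2k-1)/2$, then
$$
\tau(\mathcal{C}) = (d-1)^2 - r(d-r-1) = (2k-1)^2 - r(2k-1-r).
$$
The right-hand side, as a function of $r$ on the range $0\leq r\leq (2k-1)/2$, is minimized at the endpoint $r=(2k-1)/2$ (it is a downward parabola in $r$ with vertex at $r=(2k-1)/2$), so
$$
\tau(\mathcal{C}) \geq (2k-1)^2 - \frac{(2k-1)^2}{4} = \frac{3}{4}(2k-1)^2 = 3k^2 - 3k + \tfrac34,
$$
hence $\tau(\mathcal{C})\geq 3k^2-3k+1$ since $\tau$ is an integer. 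Comparing with the combinatorial upper bound $\tau(\mathcal{C})\leq 2k^2-2k+c$ from the first step yields $3k^2-3k+1\leq 2k^2-2k+c$, i.e.\ $k^2-k+1\leq c$, which fails for all $k\geq 2$ once $c$ is the small constant produced by the incidence count. This contradiction shows no such free arrangement exists.

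The step I expect to be the main obstacle is making the combinatorial bound $\tau(\mathcal{C})\leq 2k^2-2k+c$ genuinely tight and rigorous, i.e.\ controlling exactly how much Tjurina number the allowed singularity types $A_1, A_3, D_4, X_9$ can absorb relative to the Bézout budget $4\binom{k}{2}$ of intersection points. One must check that higher-weight singularities ($A_3$ with $\tau=3$ eats two units of intersection multiplicity from a single pair; $D_4$ and $X_9$ involve three or four local branches drawn from possibly several conics) never let $\tau$ exceed the quadratic-in-$k$ threshold needed; this is where a careful branch-counting argument, summing intersection multiplicities $\sum_{p}(\mu_p+\text{mult}_p-1)$ or the analogous identity for $\tau$ in the quasi-homogeneous case, has to be carried out. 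A secondary point to handle is the case $\mathrm{mdr}(f) > (d-1)/2$: there Theorem~\ref{dp} does not directly apply, but a free curve always satisfies $d_1+d_2=d-1$ with $d_1\leq d_2$, so $d_1=\mathrm{mdr}(f)\leq (d-1)/2$ automatically, and this case is vacuous. Once the incidence bound is pinned down, the arithmetic comparison $3k^2-3k+1 > 2k^2-2k+c$ for $k\geq 2$ closes the argument.
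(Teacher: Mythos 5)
The statement is quoted in the paper from \cite{Pokora} without proof, so your proposal can only be judged on its own terms. The skeleton is the right kind of argument (a du Plessis--Wall lower bound on $\tau$ for free curves against a B\'ezout-type upper bound coming from the allowed singularity types), but the quantitative core --- precisely the step you flag as the main obstacle --- is wrong as stated. The local Tjurina numbers are $\tau(A_1)=1$, $\tau(A_3)=3$, $\tau(D_4)=4$, $\tau(X_9)=9$ (all these singularities are quasi-homogeneous, so $\tau=\mu$, and an ordinary $m$-fold point has $\mu=(m-1)^2$), not $1,3,3,7$. Consequently your claimed bound $\tau(\mathcal{C})\le 2k^2-2k+c$ with $c$ a small constant is false: already for $k=2$ two bitangent conics give two tacnodes and $\tau=6>4$, and for general $k$ pairwise-bitangent configurations give $\tau$ close to $3k^2-3k$, exceeding $2k^2-2k$ by a full quadratic term. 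The correct accounting compares $\tau_p$ with the local pairwise intersection budget $\iota_p=\sum_{i<j}(C_i\cdot C_j)_p$, which equals $1,2,3,6$ for $A_1,A_3,D_4,X_9$ respectively; hence $\tau_p\le \tfrac32\,\iota_p$ (with equality for $A_3$ and $X_9$), and B\'ezout gives $\sum_p\iota_p=4\binom{k}{2}=2k(k-1)$, so the true bound is $\tau(\mathcal{C})\le 3k^2-3k$.

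With the correct constants your second step does close the argument, but only by a margin of $1$, not by the slack $k^2-k$ you budget for. For a free curve of degree $d=2k$ one has $d_1\le (d-1)/2$, hence $d_1\le k-1$, and $\tau=(2k-1)^2-d_1(2k-1-d_1)\ge (2k-1)^2-(k-1)k=3k^2-3k+1$, which contradicts $\tau(\mathcal{C})\le 3k^2-3k$ by exactly one. (Your secondary remark that $d_1={\rm mdr}(f)\le (d-1)/2$ is automatic for free curves is correct.) So the theorem is indeed provable along the lines you propose, but the proof stands or falls on the exact local Tjurina numbers and the exact ratio $3/2$; as written, with $\tau(D_4)=3$, $\tau(X_9)=7$ and the bound $2k^2-2k+c$, the argument does not work and the final arithmetic comparison is based on a false premise. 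Repaired as above it becomes a complete and short proof.
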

This result is optimal in the sense that if we add to the above list of singularities type $A_{5}$, then we can find an arrangement that is free, see \cite[Remark 2.5]{P}. On the other hand, this result stands in odds with the picture of line arrangements where we can find many free arrangements admitting nodes, triple and quadruple points.

Now we can focus on the case of conic-line arrangements $\mathcal{CL} \subset \mathbb{P}^{2}_{\mathbb{C}}$ with $d\geq 1$ and $k\geq 1$. We assume that our arrangements admit only singularities of type $A_{1}, A_{3}, D_{4}$ -- this is the first non-trivial situation that distinguishes the geometry of conic arrangements and line arrangements.
Our main result, obtained in a joint paper with A. Dimca \cite{DimcaPokora}, delivers a complete classification of such arrangements.

\begin{theorem}
\label{thm11}
Let $\mathcal{CL}$ be an arrangement of $d \geq 1$ lines and $k \geq 1$ smooth conics having only nodes, tacnodes, and  ordinary triple points as singularities. Then  $\mathcal{CL}$ is free if and only if one of the following cases occur:
\begin{enumerate}

\item[(1)] $d=k=1$ and $\mathcal{CL}$ consists of a smooth conic and a tangent line, which means we can only have one tacnode.

\item[(2)] $d=2$, $k=1$ and $\mathcal{CL}$ consists of a smooth conic and two tangent lines, so in this case we have one node and two tacnodes.

\item[(3)] $d=3$, $k=1$ and either $\mathcal{CL}$ is a smooth conic inscribed in a triangle, or $\mathcal{CL}$ is a smooth conic circumscribed in a triangle.
In the first case we have three nodes and three tacnodes, and in the second case we have only three ordinary triple points as intersections.

\item[(4)] $d=3$, $k=2$ and $\mathcal{CL}$ consists of a triangle $\Delta$, a smooth conic inscribed in $\Delta$, and another smooth conic circumscribed in $\Delta$. In this case we have $5$ tacnodes and and three ordinary triple points as singularities.

\end{enumerate}
In particular, a free conic-line arrangement having only nodes, tacnodes, and ordinary triple points is determined up to a projective equivalence by the weak-combinatorics.
\end{theorem}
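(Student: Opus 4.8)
The plan is to pin the pair $(d,k)$ to a short finite list by combining Bézout-type incidence identities with the du Plessis--Wall criterion, and then to analyse each remaining configuration by hand. Write $m=d+2k$ for the degree of $\mathcal{CL}$ and let $n$, $t$, $t_{3}$ be the numbers of nodes, tacnodes and ordinary triple points; since $A_{1},A_{3},D_{4}$ are all quasi-homogeneous with local Tjurina numbers $1,3,4$, we have $\tau(\mathcal{CL})=n+3t+4t_{3}$. First I would record a per-component Bézout identity: for a component $D$ of degree $e_{D}\in\{1,2\}$, a point of $D\cap{\rm Sing}(\mathcal{CL})$ contributes $1$, $2$ or $2$ to the intersection number of $D$ with the union of the remaining components according as it is a node, a tacnode, or a triple point of $\mathcal{CL}$, so that $n_{D}+2t_{D}+2s_{D}=e_{D}(m-e_{D})$, where $n_{D},t_{D},s_{D}$ count the nodes, tacnodes and triple points lying on $D$. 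Summing over all components (using $\sum_{D}n_{D}=2n$, $\sum_{D}t_{D}=2t$, $\sum_{D}s_{D}=3t_{3}$ and $\sum_{D}e_{D}(m-e_{D})=m^{2}-d-4k$) yields $n+2t+3t_{3}=\binom{d}{2}+2dk+4\binom{k}{2}$, hence $\tau(\mathcal{CL})=\binom{d}{2}+2dk+4\binom{k}{2}+t+t_{3}$.

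Next I would invoke Theorem \ref{dp}: a free curve has $r={\rm mdr}(f)=d_{1}\le (m-1)/2$, and freeness is then equivalent to $\tau(\mathcal{CL})=(m-1)^{2}-r(m-1-r)$; as the right-hand side is strictly decreasing in $r$ on $[1,(m-1)/2]$, freeness squeezes $\tau(\mathcal{CL})$ between $(m-1)^{2}-\lfloor(m-1)/2\rfloor\lceil(m-1)/2\rceil$ and $m^{2}-3m+3$. Against this I would play an upper bound for $\tau(\mathcal{CL})$: the identities above give $t_{D}+s_{D}\le e_{D}(m-e_{D})/2$ per component, which I would sharpen using elementary incidence constraints (a line is tangent to a fixed conic at most once; three of the $d$ lines are concurrent only if they use up three of the $\binom{d}{2}$ line-pairs; a smooth conic meets the residual curve in $2(m-2)$ points with multiplicity). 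A finite linear-programming-style comparison of the two bounds, together with $\tau(\mathcal{CL})$ being determined by the incidence data, should eliminate every pair with $d\ge4$ or $k\ge3$, leaving $(d,k)\in\{(1,1),(2,1),(3,1),(1,2),(2,2),(3,2)\}$.

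For each of the six surviving pairs I would run through the incidence patterns compatible with the identities of the first step and with the singularity list; for a pattern that yields a free arrangement I would exhibit two explicit Jacobian syzygies and conclude by Saito's criterion (reading off the exponents), while for the rest I would note that the resulting $\tau(\mathcal{CL})$ fails to equal $(m-1)^{2}-r(m-1-r)$ for any admissible $r$ --- or, in the few borderline patterns, that ${\rm mdr}(f)$ is too large --- so the curve is not free. This should isolate exactly cases (1)--(4); in particular $(1,2)$ and $(2,2)$ contribute no free member, and in case (4) one finds that requiring only nodes, tacnodes and triple points together with freeness forces the inscribed and circumscribed conics to be bitangent to each other, which produces the $5$ tacnodes and no nodes. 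Finally, for the concluding assertion I would check that the four weak-combinatorics vectors of (1)--(4) are pairwise distinct (the numbers of degree-$1$ and degree-$2$ components, or of nodes/tacnodes/triple points, already differ), so the weak-combinatorics selects the case, and that each case is a single ${\rm PGL}_{3}(\mathbb{C})$-orbit: a tangent line to a smooth conic is the datum of its point of tangency, and ${\rm PGL}_{3}$ acts through ${\rm PGL}_{2}$ on the conic, sharply $3$-transitively on up to three marked points, which handles (1), (2) and both subcases of (3) (inscribed $=$ conic tangent to the three sides, circumscribed $=$ conic through the three vertices); for (4) I would normalise the triangle to $xyz=0$ and compute directly --- its stabiliser in ${\rm PGL}_{3}$ being the diagonal torus extended by $S_{3}$ --- that the inscribed/circumscribed pairs giving a free arrangement form one orbit.

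I expect the main obstacle to be the second step, namely finding incidence inequalities that are at once sharp enough to force $d\le3$ and $k\le2$ and clean enough to state: the crudest bounds are borderline already for $m=7,8$, so one must use the exact identities together with parity. A secondary difficulty is the explicit orbit computation for case (4).
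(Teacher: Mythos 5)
This theorem is not proved in the present paper at all: it is quoted from the joint work with Dimca \cite{DimcaPokora}, so there is no internal argument to compare against, and your proposal has to be judged on its own terms. Your preparatory identities are correct: with $m=d+2k$ one indeed gets $n+2t+3t_{3}=\binom{d}{2}+2dk+4\binom{k}{2}=:N$ and hence $\tau(\mathcal{CL})=N+t+t_{3}$, and the local Tjurina numbers $1,3,4$ are right. The genuine gap is exactly where you place the ``main obstacle'', and it is worse than you suggest: the comparison you propose against Theorem \ref{dp} is not merely borderline at $m=7,8$, it is vacuous for every $m$. From $2t+3t_{3}\le N$ one only gets $t+t_{3}\le N/2$, hence $\tau\le\tfrac{3}{2}N=\tfrac{3}{4}\bigl(m^{2}-d-4k\bigr)$, while freeness forces only $\tau\ge(m-1)^{2}-\lfloor\tfrac{m-1}{2}\rfloor\lceil\tfrac{m-1}{2}\rceil\ge\tfrac{3}{4}(m-1)^{2}$; the resulting requirement $\tfrac34(m-1)^{2}\le\tfrac34(m^{2}-d-4k)$ simplifies to $d\ge 1$, which always holds. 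So the entire finiteness step rests on the unspecified ``sharpening by elementary incidence constraints'', and there is no evidence that tangency/concurrency counts alone can force $d\le 3$, $k\le 2$. In the actual proof cited here, the reduction to low degree comes from substantially stronger, non-combinatorial inputs (lower bounds on ${\rm mdr}(f)$ for curves with such mild singularities, maximality properties of the global Tjurina number in the sense of du Plessis--Wall, and Hirzebruch-type restrictions on the possible numbers of tacnodes and triple points); even the analogous purely line-arrangement bound $d\le 9$ quoted in the paper is a separate nontrivial result. Until you supply an argument that actually bounds $(d,k)$, the classification does not get off the ground.

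The rest of the plan is reasonable but is the easy part once finiteness is granted: for each admissible weak-combinatorics $\tau$ is determined (all singularities are quasi-homogeneous), so comparing with $(m-1)^{2}-r(m-1-r)$ and using Saito's criterion settles freeness, and your spot checks are consistent (e.g.\ in case (4) one has $t=5$, $t_{3}=3$, $n=0$, $\tau=27=36-3\cdot 3$, exponents $(3,3)$, and bitangency of the two conics is forced by the count). The uniqueness-up-to-projectivity argument via the sharply $3$-transitive action on a smooth conic for cases (1)--(3), a direct normalization for case (4), and the observation that the four weak-combinatorial vectors are pairwise distinct are all fine. But these steps do not compensate for the missing quantitative reduction, which is the substance of the theorem.
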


This theorem implies the following classification result.
\begin{corollary}
\label{cor5}
Numerical Terao's Conjecture holds for conic-line arrangements with nodes, tacnodes, and ordinary triple points.
\end{corollary}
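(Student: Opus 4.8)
The statement is \textbf{NTC} for the class of conic-line arrangements whose singularities are only nodes, tacnodes and ordinary triple points, i.e. singularities of type $A_1$, $A_3$, $D_4$. These are all quasi-homogeneous, so such arrangements are legitimate inputs for \textbf{NTC}, and in particular the total Tjurina number $\tau$ is a function of the weak-combinatorics alone. Fix two such arrangements $C_1$ and $C_2$ with the same weak-combinatorics and assume $C_1$ is free. The plan is to show that the weak-combinatorics forces both of them to be projectively equivalent to one of the finitely many models listed in Theorem \ref{thm11}, so that $C_2$ inherits freeness from $C_1$.

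First I would apply Theorem \ref{thm11} to $C_1$: being free, it is projectively equivalent to one of the arrangements of cases (1)--(4), with case (3) split into its inscribed and circumscribed sub-cases. Hence the number of lines, the number of conics, and the triple $(n_2,t_3,d_4)$ of numbers of nodes, tacnodes and ordinary triple points realize one of exactly five explicit weak-combinatorial vectors, namely $(1,1;0,1,0)$, $(2,1;1,2,0)$, $(3,1;3,3,0)$, $(3,1;0,0,3)$ and $(3,2;0,5,3)$. Since $C_2$ has the same weak-combinatorics it realizes the same vector; note that all five are pairwise distinct, so the inscribed and circumscribed sub-cases of (3) are genuinely separated by the combinatorics (they share the same pair $(d,\tau)$, so this separation really has to come from the finer data).

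Next I would prove the rigidity statement that does the real work: any conic-line arrangement $C$ with only nodes, tacnodes and ordinary triple points whose weak-combinatorics is one of the five vectors above is projectively equivalent to the corresponding model of Theorem \ref{thm11}. The tools are elementary incidence geometry together with B\'ezout bookkeeping: a line and a smooth conic meet in a length-$2$ subscheme, so they either cross transversally at two points (two nodes) or are tangent (one tacnode); two distinct lines meet at exactly one point; three distinct lines are either concurrent or bound a triangle; a smooth conic has a single branch at each of its points; and one keeps track of how many intersection points each component can absorb. Imposing the prescribed counts $(n_2,t_3,d_4)$ pins the incidence pattern down uniquely --- for instance $(3,1;0,0,3)$ forces the three lines to bound a triangle (if they were concurrent there could be at most one triple point), and then forces the conic to pass through all three vertices, since at each vertex anything other than a transverse passage of the conic would produce a node or a singularity outside the allowed list; this is exactly the circumscribed model. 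A projective change of coordinates then normalizes the triangle to $xyz=0$ and the conic(s) to standard form, which gives uniqueness up to projective equivalence, and freeness of each model is confirmed either by exhibiting two syzygies and invoking Saito's criterion or by checking the du Plessis--Wall equality of Theorem \ref{dp}. Combining the two steps, $C_2$ is projectively equivalent to $C_1$ and is therefore free, which is \textbf{NTC} in this class.

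I expect the main obstacle to be the rigidity lemma in the densest case $(3,2;0,5,3)$. There one must show that the two conics are forced to be an inscribed conic and a circumscribed conic of the common triangle, that these two conics are then necessarily bitangent --- this accounts for the two ``extra'' tacnodes among the five, since two smooth conics meet in a length-$4$ subscheme and, given the prescribed counts, the only way to distribute that length over admissible singularities without creating a $D_4$ or a worse point is as two tacnodes --- and that the resulting configuration is projectively unique. Ruling out the degenerate positions (a conic through a vertex producing an unwanted higher singularity, a conic tangent to one side at a point lying on another side, a conic-conic intersection landing on a line, and so on) is where the case analysis is heaviest; the sub-cases $(3,1;3,3,0)$ and $(3,1;0,0,3)$ require only a lighter version of the same bookkeeping.
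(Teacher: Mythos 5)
Your proposal is correct and follows essentially the same route as the paper, which deduces the corollary directly from Theorem \ref{thm11} and its closing assertion that a free arrangement in this class is determined up to projective equivalence by the weak-combinatorics. The only difference is one of explicitness: the paper states the implication without detail, while you spell out the rigidity step (that the five weak-combinatorial vectors force the corresponding geometric configurations even for an arrangement not assumed free a priori), which is precisely the bookkeeping needed to make the deduction complete.
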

The following remark explains the situation when $k=0$, so we arrive at the scenario of line arrangements with double and triple points as intersections.
\begin{remark}
Numerical Terao's Conjecture holds for line arrangements having only double and triple points as singularities and it follows from the following observations. First of all, a free line arrangement $\mathcal{A}: f=0$ has to satisfy the condition that $d = \deg f \leq 9$, see \cite{Kabat} for an elementary argument. Next, observe that the freeness condition implies that
$$d_{1} + d_{2} = d - 1 \quad \text{ with } d_{1} \leq d_{2},$$
and we have 
$(d-1)/2 \geq d_1 = {\rm mdr}(f)$. This gives us $d_{1} \leq 3$ or
$d_1= {\rm mdr}(f) = 4$ and $d=9$. Note that if $\mathcal{A}':f'=0$ has the same weak-combinatorics as $\mathcal{A} :f=0$, then $\tau(\mathcal{A})=\tau(\mathcal{A}')$ since the total Tjurina number of a reduced curve with quasi-homogeneous singularities is determined by the weak-combinatorics. It implies, in particular, that
$r'= {\rm mdr}(f') \leq {\rm mdr}(f)$, and this follows from the maximality of the Tjurina number for free reduced curves, see \cite{duP}.
In the first case, using the complete classification of line arrangements with ${\rm mdr}(f) \leq 3$ provided in \cite{BT}, we can conclude the statement. In the second case, we use again the maximality of the Tjurina number of free curves \cite{duP} and we conclude that $\tau(\mathcal{A})=\tau(\mathcal{A}')=48$, and this maximal value is obtained when the number of nodes is $0$ and the number of triple points is equal to $12$. The only line arrangement with these invariants is the dual Hesse arrangement of lines which is free with exponents $(d_{1},d_{2}) = (4,4)$, and this arrangement is unique up to the projective equivalence.
\end{remark}
\section{Weak Ziegler pairs}
This short section is devoted to the intriguing notion of Ziegler pairs. Let us recall that this topic is strictly motivated by the following question.
\begin{question}
Is it true that for a given complex line arrangement the minimal free resolution of the associated Milnor algebra is determined by its intersection lattice?
\end{question}
At the first glance we have some doubts since there is no reasonable argument supporting the claim that the shape of the resolution of the Milnor algebras associated with line arrangements can be determined by the combinatorial structure. From our perspective, and from the perspective of \textbf{NTC}, we can focus our attention on the minimal degree of non-trivial Jacobian relations as a crucial invariant of the whole resolution. This idea leads us to the following object.
\begin{definition}[Ziegler pair]
We say that two line arrangements $\mathcal{L}_{1}, \mathcal{L}_{2} \subset \mathbb{P}^{2}_{\mathbb{C}}$ form a Ziegler pair if the intersection lattices of $\mathcal{L}_{1}$ and $\mathcal{L}_{2}$ are isomorphic, but ${\rm mdr}(\mathcal{L}_{1}) \neq {\rm mdr}(\mathcal{L}_{2})$.
\end{definition}
As we can see, Ziegler pairs have a strong connection with possible counterexamples to Terao's freeness conjecture. Recall that for line arrangements the total Tjurina number is determined by the intersection lattice (since all singularities are quasi-homogeneous), so if we could find a counterexample to Terao's conjecture, it would be all about the minimal degrees of non-trivial Jacobian relations being different.
The first pair of line arrangements with the same intersection lattices but different minimal degrees of non-trivial Jacobian relations was found by Ziegler \cite{Ziegler}, and we will say a few words about this construction. This pair consists of two arrangements having exactly $9$ lines with $6$ triple and $18$ double points as intersections, but their geometries are different. In the first case, all six triple points are on a conic, but in the second case, only $5$ triple points are on a conic, and one point is off the conic. Geometrically speaking, the condition that $6$ points are on a conic is unexpected, and this is a crucial feature for this example. This construction has recently been revisited and studied in detail by Dimca and Sticlaru \cite{DimSt}.
It is worth noticing that in the literature we can also find another variant of Ziegler pairs that we would like to present now.
\begin{definition}[Ziegler pair, 2nd variant]
We say that two line arrangements $\mathcal{L}_{1}, \mathcal{L}_{2} \subset \mathbb{P}^{2}_{\mathbb{C}}$ form a Ziegler pair if the intersection lattices of $\mathcal{L}_{1}$ and $\mathcal{L}_{2}$ are isomorphic, but they have different ${\rm AR}$ modules of algebraic relations.
\end{definition}
However, the notion of Ziegler pairs is not suitable for curve arrangements, in its whole generality, since we can detect some pathological situations.
\begin{example}
Consider an arrangement of lines $\mathcal{L}$ given by 
$$Q(x,y,z)=xy.$$
Clearly ${\rm mdr}(Q)=0$.
Now we consider the arrangement of conics $\mathcal{C}$ given by
$$G(x,y,z)=(x^2-yz)(x^{2}+z^{2}-yz).$$
Obviously these two different curve arrangements have the same Levi graphs and both are free, but
$$0={\rm mdr}(Q)\neq {\rm mdr}(G)=1.$$
\end{example}
In order to avoid such situations, we define the following.
\begin{definition}[Weak Ziegler pair]
Consider two reduced plane curves $C_{1}, C_{2} \subset \mathbb{P}^{2}_{\mathbb{C}}$ such that all irreducible components of $C_{1}$ and $C_{2}$ are smooth. We say that a pair $(C_{1},C_{2})$ forms a weak Ziegler pair if $C_{1}$ and $C_{2}$ have the same weak-combinatorics, but they have different minimal degrees of non-trivial Jacobian relations, i.e., ${\rm mdr}(C_{1}) \neq {\rm mdr}(C_{2})$.
\end{definition}
\begin{proposition}
    The line arrangements constructed in Theorem \ref{pair} form a weak Ziegler pair.
\end{proposition}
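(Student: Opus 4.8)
The plan is to observe that this statement is essentially a bookkeeping consequence of Theorem~\ref{pair}, so almost all of the work has already been done. First I would check that the definition of a weak Ziegler pair even applies here: every irreducible component of a line arrangement is a line, hence smooth, so the pair $(\mathcal{L}_1,\mathcal{L}_2)$ constructed in the proof of Theorem~\ref{pair} is a legitimate candidate. By the statement of that theorem the two arrangements carry the same weak-combinatorics $(15;24,12,0,0,3)$, so the first of the two conditions in the definition of a weak Ziegler pair is automatic, and the whole point reduces to verifying that their minimal degrees of non-trivial Jacobian relations differ.

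For this I would simply read off the exponents recorded in the proof of Theorem~\ref{pair}. Since $\mathcal{L}_1$ is free with exponents $(d_1,d_2)=(7,7)$, the identification $\mathrm{mdr}(f)=d_1$ recorded right after Definition~\ref{hom} gives $\mathrm{mdr}(\mathcal{L}_1)=7$; since $\mathcal{L}_2$ is nearly-free with exponents $(d_1,d_2,d_3)=(6,9,9)$, the same identification gives $\mathrm{mdr}(\mathcal{L}_2)=6$. Hence $\mathrm{mdr}(\mathcal{L}_1)=7\neq 6=\mathrm{mdr}(\mathcal{L}_2)$, which is exactly the second condition in the definition, and the argument is finished.

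If one prefers a self-contained argument that does not invoke a computer-algebra computation of the exponents, I would instead recover these two numbers from the criteria already quoted in the excerpt. From the common weak-combinatorics one computes $\tau(\mathcal{L}_1)=\tau(\mathcal{L}_2)=24\cdot 1+12\cdot 4+3\cdot 25=147$, using that an ordinary $i$-fold point is quasi-homogeneous with $\tau=(i-1)^2$. Feeding $d=15$ and $\tau=147$ into the du Plessis--Wall identity $(d-1)^2-r(d-r-1)=\tau(C)$ of Theorem~\ref{dp} yields $(r-7)^2=0$, so $\mathrm{mdr}(\mathcal{L}_1)=7$; feeding $d=15$ and $\tau+1=148$ into Dimca's nearly-freeness identity $(d-1)^2-d_1(d-d_1-1)=\tau(C)+1$ yields $(d_1-6)(d_1-8)=0$, and the constraint $d_1\le d_2=d_3$ with $d_1+d_2=15$ forces $\mathrm{mdr}(\mathcal{L}_2)=6$. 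The only obstacle is the purely arithmetic step of solving these two quadratics; there is no conceptual difficulty, because the freeness of $\mathcal{L}_1$ and the nearly-freeness of $\mathcal{L}_2$ are already supplied by Theorem~\ref{pair}.
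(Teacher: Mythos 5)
Your proof is correct and takes essentially the same route as the paper: both arguments note that the two arrangements share the weak-combinatorics $(15;24,12,0,0,3)$ and then read off $\mathrm{mdr}(\mathcal{L}_1)=7$ and $\mathrm{mdr}(\mathcal{L}_2)=6$ from the free exponents $(7,7)$ and nearly-free exponents $(6,9,9)$ recorded in Theorem~\ref{pair}. Your supplementary derivation of these values from $\tau=147$ via the du Plessis--Wall and Dimca criteria is a correct (and slightly more self-contained) consistency check, but it does not change the substance of the argument.
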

\begin{proof}
The arrangements $\mathcal{L}_{1}$ and $\mathcal{L}_{2}$ have the same weak-combinatorics, but
$$7 = {\rm mdr}(\mathcal{L}_{1}) \neq {\rm mdr}(\mathcal{L}_{2})=6,$$
so they form a weak Ziegler pair.
\end{proof}
It is natural to wonder whether we can construct new weak Ziegler pairs in different classes of curve arrangements, for instance in the class of conic-line arrangements. Let us recall the following example by Schenck and Toh\u aneanu from \cite{STo} providing the first weak Ziegler pair for conic-line arrangements with ordinary but not quasi-homogeneous singularities.
\begin{example}
\label{2bb}
 Let us consider the arrangement $\mathcal{C}_{1}$ being the union of the following five smooth conics:
$$\begin{array}{*{3}c}
C_1& : &(x-3z)^2+(y-4z)^2-25z^2=0 \\
C_2& : &(x-4z)^2+(y-3z)^2-25z^2=0 \\
C_3& : &(x+3z)^2+(y-4z)^2-25z^2=0 \\
C_4& : &(x+4z)^2+(y-3z)^2-25z^2=0 \\
C_5& : &(x-5z)^2+y^2-25z^2=0.
\end{array}$$
The arrangement $\mathcal{C}_{1}$ has $13$ ordinary singular points, $10$ of these
points are nodes, while at the
points $(0:0:1),(1:i:0),(1:-i:0)$ all the five conics meet.
We can check that the quintuple point $q=(0:0:1)$ is not quasi-homogeneous since $15 = \tau_{q} \neq \mu_{q}=16$.
If we add the following lines
$$\ell_{1} : z=0, \quad \ell_{2} : x-iy = 0, \quad \ell_{3} : x+iy =0,$$
then we obtain arrangement $\mathcal{CL}_{1} = \{C_{1}, ..., C_{5}, \ell_{1},\ell_{2},\ell_{3}\}$ which is free with exponents $(d_{1}, d_{2})=(6,6)$.

Next, let us consider the arrangement $\mathcal{C}_{2}$ being the union of the following five smooth conics:
$$\begin{array}{ccl}
C_1'& : &x^2+8y^2+21xy-xz-8yz=0 \\
C_2'& : &x^2+5y^2+13xy-xz-5yz=0 \\
C_3'& : &x^2+9y^2-4xy-xz-9yz=0 \\
C_4'& : &x^2+11y^2+xy-xz-11yz=0 \\
C_5'& : &x^2+17y^2-5xy-xz-17yz=0.
\end{array}$$
Observe that $\mathcal{C}_{2}$ is combinatorially and weak-combinatorially identical to $\mathcal{C}_{1}$, but the quintuple points $(0:0:1),(1:0:1),(0:1:1)$, where all
the branches meet, are not quasi-homogeneous since  $15=\tau \neq \mu=16$.
If we add the lines 
$$\ell_{1}' : x=0, \quad \ell_{2}' : y=0, \quad \ell_{3}' : x+y-z=0,$$
then we obtain arrangement $\mathcal{CL}_{2} = \{C_{1}', ..., C_{5}', \ell_{1}',\ell_{2}',\ell_{3}'\}$ having the same strong combinatorics as $\mathcal{CL}_{1}$, and since all singularities are ordinary, then the arrangements have the same weak-combinatorics, but $\mathcal{CL}_{2}$ is not free. In fact, $\mathcal{CL}_{2}$ is just $4$-syzygy with exponents $(d_{1},d_{2},d_{3},d_{4}) = (7,7,7,7)$.
\end{example}
\begin{remark}
In the above example we can literally see the phenomenon that if singularities of our curve are not quasi-homogeneous, then the total Tjurina number is not determined by the weak-combinatorics.
\end{remark}
Now we would like to introduce, similarly to the case of line arrangements, the second variant on the notion of Weak Ziegler pairs.

\begin{definition}[Weak Ziegler pair, 2nd variant]
Consider two reduced plane curves $C_{1}, C_{2} \subset \mathbb{P}^{2}_{\mathbb{C}}$ such that all irreducible components of $C_{1}$ and $C_{2}$ are smooth. We say that a pair $(C_{1},C_{2})$ forms a weak Ziegler pair if $C_{1}$ and $C_{2}$ have the same weak-combinatorics, but they have different ${\rm AR}$ modules of algebraic relations.
\end{definition}
Here we would like to present an interesting example of a Weak Ziegler pair (according to the 2nd variant) formed by using a pair of conic-line arrangements with quasi-homogeneous singularities. This pair is the second known example in the class of conic-line arrangements with quasi-homogeneous ordinary singularities (cf. \cite{LBW}), and it is also interesting due to another feature that we will explain in a separate remark.
\begin{example}
\label{WZZ}
Let us consider two conic-line arrangements $\mathcal{CL}_{1}, \mathcal{CL}_{2} \subset \mathbb{P}^{2}_{\mathbb{C}}$ defined as follows:
\begin{equation*}
\mathcal{CL}_{1} \, : (-24x^{2}-23y^{2}+76yz+195z^{2})(y-3x-5z)(y+3x-5z)(y+z)(y-3z)x(x+y+z) = 0,
\end{equation*}
\begin{equation*}
\mathcal{CL}_{2} \, : (-24x^{2}-23y^{2}+76yz+195z^{2})(y-2x-3z)(y+2x-3z)(y+z)(y-3z)x(x+y+z) = 0.
\end{equation*}
Both arrangements have only \textbf{ordinary quasi-homogeneous} singularities, namely they have $12$ nodes, $3$ ordinary triple points, and one ordinary quadruple point. The figures below show their geometric realizations.
\vskip 0.3cm
\noindent
\begin{minipage}{0.45\textwidth}
\centering
\begin{tikzpicture}[scale=0.45,line cap=round,line join=round,x=1.0cm,y=1.0cm]
\clip(-6.979656031286098,-7.476422086706328) rectangle (7.166401239887506,9.511309733564092);
\draw [rotate around={90.:(0.,1.6521739130434783)},line width=0.5 pt] (0.,1.6521739130434783) ellipse (3.3478260869565215cm and 3.2773376434211823cm);
\draw [line width=.5 pt,domain=-9.979656031286098:11.166401239887506] plot(\x,{(--5.--3.*\x)/1.});
\draw [line width=0.5 pt,domain=-9.979656031286098:11.166401239887506] plot(\x,{(--5.-3.*\x)/1.});
\draw [line width=.5 pt,domain=-9.979656031286098:11.166401239887506] plot(\x,{(-1.-0.*\x)/1.});
\draw [line width=.5 pt,domain=-9.979656031286098:11.166401239887506] plot(\x,{(--3.-0.*\x)/1.});
\draw [line width=.5 pt] (0.,-5.476422086706328) -- (0.,9.511309733564092);
\draw [line width=.5 pt,domain=-9.979656031286098:11.166401239887506] plot(\x,{(-1.-1.*\x)/1.});
\end{tikzpicture}

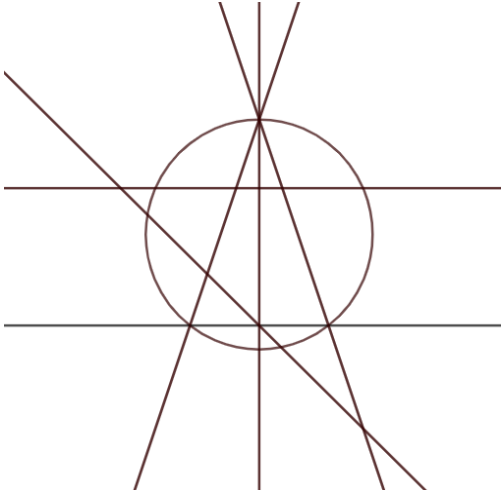
\captionof{figure}{Arrangement $\mathcal{CL}_{1}$.}
\end{minipage}
\hfill
\begin{minipage}{0.45\textwidth}
\centering
\begin{tikzpicture}[scale=0.45,line cap=round,line join=round,x=1.0cm,y=1.0cm]
\clip(-6.979656031286098,-7.476422086706328) rectangle (7.166401239887506,9.511309733564092);
\draw [rotate around={90.:(0.,1.6521739130434783)},line width=.5 pt] (0.,1.6521739130434783) ellipse (3.3478260869565215cm and 3.2773376434211823cm);
\draw [line width=.5 pt,domain=-9.868360993016763:11.27769627815684] plot(\x,{(--3.--2.*\x)/1.});
\draw [line width=.5 pt,domain=-9.868360993016763:11.27769627815684] plot(\x,{(--3.-2.*\x)/1.});
\draw [line width=.5 pt,domain=-9.868360993016763:11.27769627815684] plot(\x,{(-1.-0.*\x)/1.});
\draw [line width=.5 pt,domain=-9.868360993016763:11.27769627815684] plot(\x,{(--3.-0.*\x)/1.});
\draw [line width=.5 pt] (0.,-6.47807743113034) -- (0.,9.103227926576531);
\draw [line width=.5 pt,domain=-9.868360993016763:11.27769627815684] plot(\x,{(-1.-1.*\x)/1.});
\end{tikzpicture}
\captionof{figure}{Arrangement $\mathcal{CL}_{2}$.}
\end{minipage}     
\end{example}
\noindent
Denote by $Q_{i}$ the defining equation for the arrangement $\mathcal{CL}_{i}$ with $i \in\{1,2\}$. Now we look at the ${\rm AR}$ modules of algebraic relations. We have the following resolutions:
\begin{equation*}
0\rightarrow S(-7) \oplus S(-6) \rightarrow S(-6)\oplus S(-5)^{\oplus2} \oplus S(-4) \rightarrow AR(Q_{1}),
\end{equation*}
\begin{equation*}
0\rightarrow S(-7) \rightarrow S(-5)^{\oplus2} \oplus S(-4) \rightarrow AR(Q_{2}),
\end{equation*}
which means that our arrangements have different ${\rm AR}$ modules with the same weak-combinatorics, and thus form a Weak Ziegler pair.
\begin{remark}
\label{rrr}
In \cite{her}, Pokora with Dimca and Abe introduced a new homological approach to plane curves. Let $C: f=0$ be a reduced plane curve of degree $d$ and let $d_1$ and $d_2$ be the minimal degrees of a minimal system of generators for the module ${\rm AR}(f)$. We say that $C$ is of type $k$ if $d_1+d_2=d-1+k$. Given this definition, our arrangements in Example \ref{WZZ} are curves of type $2$. Moreover, according to \cite[Proposition 1.13]{her}, the arrangement $\mathcal{CL}_{1}$ is of type $2B$ and the arrangement $\mathcal{CL}_{2}$ is of type $2A$. Finally, let us point out also that the conic-line arrangement $\mathcal{CL}_{2}$ from Example \ref{2bb} is a curve of type $2B$.
\end{remark}

We finish our considerations by proposing the following problem.
\begin{problem}
Construct more examples of weak Ziegler pairs in the class of curve arrangements (so arrangements with not only lines as irreducible components) admitting only quasi-homogeneous singularities.
\end{problem}
\section{Ziegler pairs and the orchard problem}
In this section we would like to present a new type of Ziegler pairs, which is somehow surprisingly related to a classical construction in the orchard problem. Let us use the notation captured from \cite{Burr}. We define a $(p,t)$-arrangement consisting of $p$ points and $t$ lines in the real projective plane, chosen so that each line has exactly $3$ points on it. The classical orchard problem asks to find an arrangement with the largest $t$ for any given value of $p$. Using the duality in projective spaces, we can reformulate this problem, which now boils down to finding line arrangement in the real projective plane with the largest number of triple intersections $t^{*}$ for a given number of lines $p^{*}$. This problem, in the light of a recent paper by Green and Tao \cite{Green} and by Yuzvinsky \cite{Y}, uses a very nice geometric construction involving cubic curves. We would like to present here a combinatorial description that allows us to construct an arrangement of $n$ lines.

Consider the cyclic group $\mathbb{Z}/n\mathbb{Z}$ of order $n$. The lines in the real projective plane are labeled by
its elements, namely $0,1,...,n-1$, and now we require that three lines $a,b,c$ meet in a point if and
only if $a+b+c = 0$. The resulting line arrangement, denoted by $\mathcal{L}_{n}$, has exactly $n$ lines, and by \cite[Proposition 2.6]{Green} it has exactly $\lfloor n(n-3)/6\rfloor + 1$ triple intersection points. This description corresponds to the picture where we have a group of points $E_{n}$ in an elliptic curve with the natural group action. 

Let us now focus on the case $n=10$. We can compute that the line arrangement $\mathcal{L}_{10}$ can be defined by the following equation:
\begin{multline*}
Q_{(s,t)} = xyz(tx+sy+tz)(y+z)(x+y+z)(x+sy+tz)\bigg((-s+t+1)x+(-s+t)z\bigg) \\ \bigg(x+(s-t)y\bigg) \bigg((-st^2+st+t^3-t)x+(-st+t^2)y+(-st^2+t^3)z\bigg),    
\end{multline*}
subject to the conditions that $st - 2s - t^2 + t + 1 =0$ and $(s,t) \not\in \{{ (1/2,0), (1,1), (0,w) }\}$, with $w$ being a root of $f(r) = r^{2}-r-1$. Geometrically speaking, our arrangement has the $2$-parameter realization space, which is a hyperbola minus a bunch of points - these are the points at which the arrangement degenerates. We can check that there is an integral point on this hyperbola, namely $(s,t)=(5,3)$, and this point is going to be crucial for us right now.

Consider the arrangement $\mathcal{L}_{10}^{1}$ given by $Q_{(5,3)}=0$. Using \verb}SINGULAR} we can directly compute the resolution of $AR(Q_{(5,3)})$, namely 
\begin{equation*}
0\rightarrow S(-8) \rightarrow S(-6)^{\oplus2} \oplus S(-5) \rightarrow AR(Q_{(5,3)}),
\end{equation*}
and, in the light of Remark \ref{rrr}, the arrangement $\mathcal{L}_{10}^{1}$ is of type $2A$. Let us now consider arrangement $\mathcal{L}^{2}_{10}$ given by $Q_{((5\sqrt{5}+15)/4, \sqrt{5}+3)} = 0$. Again, using \verb}SINGULAR} we can directly compute the resolution of $AR$ module obtaining the following:
\begin{equation*}
0\rightarrow S(-8) \oplus S(-7) \rightarrow S(-7) \oplus S(-6)^{\oplus2} \oplus S(-5) \rightarrow AR(Q_{(5\sqrt{5}+15)/4,\sqrt{5}+3)}),
\end{equation*}
and, in the light of Remark \ref{rrr}, the arrangement $\mathcal{L}_{10}^{2}$ is of type $2B$. 

Summing up, we have two line arrangements $\mathcal{L}_{10}^{1}, \mathcal{L}_{10}^{2}$ having the same combinatorics but different $AR$ modules, so we have just constructed a new Ziegler pair.

Let us now focus on the case $n=12$. We have the following equation for $\mathcal{L}_{12}$:
\begin{multline*}
Q^{'}_{(s,t)} =  xyz(x+y+z)(x+y+tz)(tx+sy+tz)(y+z)(x + (s-t)y)\bigg((-s+t+1)x + (-s+t)z)\bigg) \\
(x+sy+tz)\bigg((-s t^2 + st + t^3 - t)x+(-st + t^2)y+(-st^2 + t^3)z\bigg) \\
\bigg((s^2 t - st^2 - 2st + t^2 + t)x+(s^3t - s^3 - 2s^2 t^2 + s^2 t + s^2 + s t^3 - st)y + (s^2 t - 2st^2 + t^3)z\bigg),
\end{multline*}
subject to the condition that $$ s = t(2-t), \quad t \notin \{ 0,1,2 \} $$
and, additionally, $t$ is not a root of $r^2-r+1$ or $r^2-2r+2$.

Consider the arrangement $\mathcal{L}_{12}^{1}$ given by $Q'_{(-3,3)} = 0$. Using \verb}SINGULAR} we can directly compute the resolution of $AR(Q^{'}_{(-3,3)})$, namely
\begin{equation*}
0\rightarrow S(-10) \rightarrow S(-8) \oplus S(-7) \oplus S(-6) \rightarrow AR(Q^{'}_{(-3,3)}),
\end{equation*}
which means that $\mathcal{L}_{12}^{1}$ is of type $2A$. 
\noindent
Let us now consider the arrangement $\mathcal{L}_{12}^{2}$ given by $Q'_{(s',t')} = 0$ with $s'=(1 - \sqrt3 ) (\sqrt3 - 3)$ and $t'=\sqrt{3}-1$. Using \verb}SINGULAR} we can directly compute the resolution of $AR$ module obtaining 
\begin{equation*}
0\rightarrow S(-10) \oplus S(-9) \rightarrow S(-9) \oplus S(-8) \oplus S(-7) \oplus S(-6) \rightarrow AR(Q_{(s',t')}),
\end{equation*}
which means that $\mathcal{L}_{12}^{2}$ is of type $2B$. It means that we have found another Ziegler pair. Based on our experiments we can formulate the following problem.
\begin{problem}
Is it true that the realization space of the arrangement $\mathcal{L}_{n}$ for every even integer $n\geq 10$ always admits two geometric realizations $\mathcal{L}_{n}^{1}$, $\mathcal{L}_{n}^{2}$ such that they form a Ziegler pair with one arrangement being of type $2A$ and the second arrangement of type $2B$? 
\end{problem}
\section*{Acknowledgement}
Piotr Pokora wants to thank Alex Dimca for all discussions regarding \textbf{NTC} and Ziegler pairs. Both authors would like to thank anonymous referees for useful comments.

Piotr Pokora is supported by the National Science Centre (Poland) Sonata Bis Grant \textbf{2023/50/E/ST1/00025}. For the purpose of Open Access, the author has applied a CC-BY public copyright license to any Author Accepted Manuscript (AAM) version arising from this submission.

Michael Cuntz's research was conducted within the framework of the DFG-funded Research Training Group RTG 2965: From Geometry to Numbers.

\vskip 0.5 cm
%***************************************************************************** % Addresses

Michael Cuntz,
Institut f\"ur Algebra, Zahlentheorie und Diskrete Mathematik, Fakultat f\"ur Mathematik und Physik, Leibniz Universit\"at Hannover, Welfengarten 1, D-30167 Hannover, Germany.\\
\nopagebreak
\textit{E-mail address:} \texttt{cuntz@math.uni-hannover.de}
\vskip 0.5 cm

Piotr Pokora,
Department of Mathematics,
University of the National Education Commission Krakow,
Podchor\c a\.zych 2,
PL-30-084 Krak\'ow, Poland. \\
\nopagebreak
\textit{E-mail address:} \texttt{piotr.pokora@uken.krakow.pl}
\end{document}